\title{Multiple categories of generalised quintets\\Pubbl. Mat. Univ. Genova, Preprint 607 (2017)}
\author {Marco Grandis and Robert Par\'e}
\address{Dipartimento di Matematica \\
    Universit\`a di Genova \\
    Via Dodecaneso 35 \\
    16146 - Genova, Italy \\[3ex]
	Department of Mathematics and Statistics \\
	Dalhousie University \\
	Halifax NS\\
	Canada B3H 4R2
 }
\thanks{}
\keywords{multiple category, weak double category, cubical set, quintet}
\date{}
\newtheorem{thm}{Theorem}
\def\ic{\bf\sf}
\def\ul{\underline}
\def \c {\colon}
\def \cc {\, \colon \!}
\def \q {\qquad}
\def \qq {\qquad \qquad}
\def \qqq {\qquad \qquad \qquad \qquad}
\def \sep {\,  |  \,}     
\def \bu {{\scriptscriptstyle\bullet}}
\def \ndt {\noindent}
\def \Ndt {\medskip  \noindent}
\def \skp {\medskip}
\def \for {\text{for } }
\def \inn {\text{ in } }
\def \and {\text{ and } }
\def \tilde {\raise.17ex\hbox{$\scriptstyle\mathtt{\sim}$}}   
\def \todash {\dashrightarrow}
\def \adj {\dashv}
\def \rr {\;\; \raisebox{0.6ex}{$\longrightarrow$} \hspace{-3.7ex} 
\raisebox{-0.4ex}{$\longrightarrow$} \;\;}
\def \rl {\;\; \raisebox{0.6ex}{$\longrightarrow$} \hspace{-3.7ex} 
\raisebox{-0.4ex}{$\longleftarrow$} \;\;}
\def \arv {\ar@{}}      
\def \ard {\ar@{-}}    
\def \arp {\ar@{..>}}    
\def \are {\ar@{=}}   
\def \arDb {\ar[dd] !U|\bu}   
\def \arRb {\ar[rr]|\bu}   
\def \te {\otimes}
\def \setm {\raisebox{0.3ex}{$\,\scriptstyle\setminus\,$}}
\def \ci {\!\mathbin{\raise.3ex\hbox{$\scriptscriptstyle\circ$}}\!}
\def \sub {\subset}
\def\le{\leqslant}
\def\ge{\geqslant}
\def \Sum{\raise.37ex\hbox{$\scriptstyle{\sum}$} \, }
\def \PRO{\raise.4ex\hbox{$\scriptstyle{\prod}$} \, }
\def \TE{\raise.1ex\hbox{${\bigotimes}$}}
\def \iso {\: \cong \:}
\def \id {{\rm id}}
\def \Ob {{\rm Ob}}
\def \Mor {{\rm Mor}}
\def \Hor {{\rm Hor}}
\def \Ver {{\rm Ver}}
\def \Gph {{\rm Gph}}
\def \Mlt {{\rm Mlt}}
\def \rAdj {{\rm Adj}}
\def \tv {{\rm tv}}
\def \cat {{\rm cat}}
\def \dbl {{\rm dbl}}
\def \trp {{\rm trp}}
\def \trc {{\rm trc}}
\def \cosk {{\rm cosk}}
\def \Dtrc {{\rm Dtrc}}
\def \Dcosk {{\rm Dcosk}}
\def \Dim {{\rm Dim}}
\def \Lx {{\rm Lx}}
\def \Cx {{\rm Cx}}
\def \rN {{\rm N}}
\def \lim {{\rm lim}}
\def \R {\underline{R}}
\def \uS {\underline{S}}   
\def \U {\underline{U}}
\def \V {\underline{V}}
\def \W {\underline{W}}
\def \al {\alpha}
\def \be {\beta}
\def \ga {\gamma}
\def \de {\delta}
\def \ep {\varepsilon}
\def \ze {\zeta}
\def \th {\vartheta}
\def \ka {\kappa}
\def \la {\lambda}
\def \si {\sigma}
\def \ph {\varphi}
\def \om {\omega}
\def \bze {{\bm \zeta}}
\def \bth {{\bm \vartheta}}
\def \bpi {{\bm \pi}}
\def \bom {{\bm \omega}}
\def \bA {{\bf A}}
\def \bB {{\bf B}}
\def \bC {{\bf C}}
\def \bCat {{\bf Cat}}
\def \bDbl {{\bf Dbl}}
\def \bAlg {{\bf Alg}}
\def \bMlt {{\bf Mlt}}
\def \bPsa {{\bf Psa}}
\def \one {{\bf 1}}
\def \two {{\bf 2}}
\def \thr {{\bf 3}}
\def \f {{\bf f}}
\def \g {{\bf g}}
\def \i {{\bf i}}
\def \h {{\bf h}}
\def \n {{\bf n}}
\def \p {{\bf p}}
\def \r {{\bf r}}
\def \s {{\bf s}}
\def \t {{\bf t}}
\def \A {\mathbb{A}}
\def \B {\mathbb{B}}
\def \C {\mathbb{C}}
\def \D {\mathbb{D}}
\def \Q {\mathbb{Q}}
\def \Dbl {\mathbb{D}{\rm bl}}
\def \Psa {\mathbb{P}{\rm sa}}
\def \Cmc {\mathbb{C}{\rm mc}}
\def \Adj {\mathbb{A}{\rm dj}}
\def \iA {{\ic A}}
\def \iB {{\ic B}}
\def \iC {{\ic C}}
\def \iD {{\ic D}}
\def \iM {{\ic M}}
\def \iP {{\ic P}}
\def \iQ {{\ic Q}}
\def \iS {{\ic S}}
\def \iCub {{\ic Cub}}
\def \iSpan {{\ic Span}}
\def \iCosp {{\ic Cosp}}
\def \iCmc {{\ic Cmc}}
\def \iInc {{\ic Inc}}
\def \iICat {{\ic ICat}}
\def \iGQ {{\ic GQ}}
\def \iAdj {{\ic Adj}}
\def \iBnd {{\ic Bnd}}
\def \iSC {{\ic SC}}
\def \iPs {{\ic Ps}}
\def \es {\varnothing}
\def \N {\mathbb{N}}
\def \cC {{\cal C}}
\def \dd {\partial}
\def \ddp {\partial^+}
\def \ddm {\partial^-}
\def \dda {\partial^\alpha}
\def \ddb {\partial^\beta}
\begin{document}

\maketitle

\begin{abstract}

We construct various multiple categories, based on generalised Ehresmann quintets. The main 
construction is a multiple category whose objects are all the `lax' multiple categories; the 
transversal arrows are their strict multiple functors while the arrows in a positive direction are 
multiple functors of a `mixed laxity', varying from the lax ones (in direction 1) to the colax 
ones (in direction $ \infty$).

\end{abstract}

\section*{Introduction}\label{Intro}

	This paper is about strict, weak and lax multiple categories, a higher dimensional extension 
of double categories that we have studied in the articles [GP6 - GP10]. The first two of them 
are about the 3-dimensional case, where {\em intercategories} (a kind of lax multiple category) 
cover and combine diverse structures like duoidal categories, Gray categories, Verity double 
bicategories and monoidal double categories.
	
	An infinite dimensional multiple category has objects, $i$-directed arrows in each direction 
$ i \in \N$, $ij$-cells of dimension two for all $ i < j$, and so on. The {\em transversal} direction 
$ i = 0 $ always has a categorical composition (strictly associative and unitary), while the composition in a {\em geometric} direction $ i > 0 $ can be weak, i.e.\ associative and unitary 
up to invertible transversal comparisons. The transversal composition has a strict interchange 
with all the geometric ones, but the latter have $ij$-interchangers, which are assumed to be 
invertible in the case of weak multiple categories; more generally, chiral multiple categories 
and intercategories have directed $ij$-interchangers, for $ i < j$. An $n$-dimensional multiple 
category has indices in the ordinal $ \n = \{0, 1,..., n-1\}$. The definitions can be found in 
\cite{GP6} for the 3-dimensional case, and in \cite{GP8} for the general one; they are briefly 
sketched in Section \ref{1.1}, below.
	
	Many examples of infinite-dimensional weak multiple categories studied in the previous 
papers are {\em of cubical type}: loosely speaking, all the positive directions are equivalent. 
For instance this happens for all weak multiple categories $ \iSpan(\bC) $ and $ \iCosp(\bC)$, 
of `cubical' spans and cospans over a category $ \bC $ \cite{GP8}. Even many chiral examples, 
like the chiral triple category $ \iSC(\bC) $ of spans and cospans over $ \bC $ (and its infinite 
dimensional extensions) only have two kinds of arrows in positive directions: either spans or 
cospans.
	
	Here we construct various (strict) multiple categories of highly non-cubical character, 
where the arrows in each direction are of different kinds.
	
	The starting point is Ehresmann's double category of quintets $ \Q(\bC) $ over a 
2-category $ \bC$, whose horizontal and vertical maps are the maps of $ \bC$, with 
double cells defined by 2-cells of $ \bC$
%
    \begin{equation} \begin{array}{c} 
    \xymatrix  @C=12pt @R=5pt
{
~A~   \ar[rr]^-{f}   \ar[dd]_-{u}   &   \ar@/^/[ld]^\ph   &   ~B~   \ar[dd]^-{v}
\\ 
&&&&&&   \ph\c vf \to gu\c A \to D.
\\ 
~C~   \ar[rr]_-{g}    &&   ~D~
}
    \label{0.1} \end{array} \end{equation}

	Their horizontal and vertical compositions are obvious. This construction can be easily 
extended to a multiple category $ \iQ(\bC) $ of {\em higher quintets of} $ \bC $ (see Section
 \ref{1.2}). It is of cubical type, but gives a mould in which we can cast various multiple 
 categories `of generalised quintets', no longer of cubical type.

\Ndt  {\em Outline.} In Section 1, after constructing the multiple category $ \iQ(\bC) $ 
{\em of higher quintets} over a 2-category $ \bC$, we define the notion of a multiple 
category of generalised quintets, or - more particularly - of quintet type (Section \ref{1.4}).

	Section 2 contains our main result: the construction of the multiple category $\iCmc $ 
of chiral multiple categories, indexed by the ordinal $ \bom + \one = \{0, 1,..., \infty \}$. Here 
the transversal arrows are strict multiple functors while, in direction $ p $ (for $ 1 \le p \le \infty $), 
the $p$-{\em morphisms} are `multiple functors of mixed laxity', including the lax ones (in 
direction 1) and the colax ones (in direction $ \infty $). Similar frameworks are concerned with 
intercategories, and the $n$-dimensional case.
	
	The next two sections construct a multiple category $ \iGQ(\cC) $ {\em of generalised 
quintets} over a sequence $ \cC $ of 2-categories and 2-functors. In particular, a 
2-category $ \bC $ gives a multiple category $ \iAdj(\bC) $ where an $i$-directed arrow is a 
chain $ u_0 \adj u_1 ... \adj u_i $ of consecutive adjunctions in $ \bC$, and a multiple category 
$ \iBnd(\bC) $ where an $i$-directed arrow is a bundle $ u = (u_0,..., u_i) $ of parallel arrows 
of $ \bC$.
	
	Finally in Section 5 we construct a triple category of pseudo algebras, for a 2-monad; it is 
again of quintet type. The Theorems \ref{5.7} and \ref{5.8} show how algebras and normal 
pseudo algebras of graphs of categories are related to strict and weak double categories.

\Ndt  {\em Literature.} Strict double and multiple categories were introduced and studied by 
C. Ehresmann and A.C. Ehresmann \cite{Eh, BE, EE1, EE2, EE3}. Strict cubical categories can 
be seen as a particular case of multiple categories (as shown in \cite{GP8}); their links with strict 
$\om$-categories are made clear in \cite{BrM, ABS}. The theory of weak double categories (or 
pseudo double categories) is analysed in our series [GP1 - GP4], and in other papers like 
\cite{DP1, DP2, DPP1, DPP2, Fi, FGK1, FGK2, Ga, GP11, Ko1, Ko2, Ni, P1, P2}. For weak cubical 
categories see \cite{G1, G2, G3, GP5}.

\Ndt  {\em Notation.} We mainly follow the notation of [GP8 - GP10]. The symbol $ \sub $ denotes 
weak inclusion. Categories and 2-categories are generally denoted as $ \bA, \bB...$; weak 
double categories as $ \A, \B...$; weak or lax multiple categories as $ \iA, \iB...$ More specific 
points are recalled below, in Section 1.

\Ndt  {\em Acknowledgements}. This work was partially supported by GNSAGA, a research group of INDAM (Istituto Nazionale di Alta Matematica), Italy.

	
\section{Higher quintets}\label{s1}

		After constructing a multiple category $ \iQ(\bC) $ {\em of (higher) quintets} over a 
2-category $ \bC$, we define multiple categories of generalised quintets, and - more 
particularly - of quintet type.

	In a 2-category the vertical composition of 2-cells is written as $\ph \te \psi $, in 
diagrammatic order; the whisker composition of arrows and cells by juxtaposition or dots.

\subsection{Notation}\label{1.1}
The definitions of weak and chiral multiple categories can be found in \cite{GP8}, or - 
briefly reviewed - in \cite{GP9}, Section 1. Here we only give a sketch of them, while recalling 
the notation we are using.

	The two-valued index $ \al $ (or $ \be$) varies in the set $ 2 = \{0, 1\}$, also written as 
$ \{-, +\}$.

	A {\em multi-index} $ \i $ is a finite subset of $ \N$, possibly empty. Writing $ \i \sub \N $ 
it is understood that $ \i $ is finite; writing $ \i = \{i_1,..., i_n\} $ it is understood that $ \i $ has 
$ n $ distinct elements, written in the natural order $ i_1 < i_2 < ... < i_n$; the integer $ n \ge 0 $ 
is called the {\em dimension} of $ \i$. We write:
%
    \begin{equation} \begin{array}{ccc}
\i j  =  j\i  =  \i \cup \{j\}      & \qq &  (\for  j \in \N \setm \i),
\\[5pt]
 \i|j  =  \i \setm \{j\}   &&  (\for  j \in \i).
    \label{1.1.1} \end{array} \end{equation}

	For a weak multiple category $ \iA$, the set of $\i$-cells $ A_\i  $ is written as 
$ A_*$, $ A_i$, $A_{ij} $ when $ \i  $ is $\es$, $\{i\}$ or $ \{i, j\} $ respectively. Faces and 
degeneracies, satisfying the {\em multiple relations}, are denoted as
%
    \begin{equation}
\dda_j \c X_\i  \to  X_{\i |j},    \q    e_j\c  X_{\i |j} \to  X_\i    \qq    (\for \al = \pm, \;  j \in \i).
    \label{1.1.2} \end{equation}

	The {\em transversal direction} $ i = 0 $ is set apart from the positive, or {\em geometric}, 
directions. For a {\em positive multi-index} $ \i  = \{i_1,..., i_n\} \sub \N^* = \N \setm \{0\}$, the 
{\em augmented multi-index} $ 0\i  = \{0, i_1,..., i_n\} $ has dimension $ n+1$, but both $ \i  $ 
and $ 0\i  $ have {\em degree} $ n$. An $\i$-cell $ x \in A_\i  $ of $ \iA $ is also called an 
$\i$-{\em cube}, while a $0\i $-cell $ f \in A_{0\i} $ is viewed as an $\i$-{\em map} 
$ f\c x \to_0 y$, where $ x = \ddm_0 f $ and $ y = \ddp_0 f$. Composition in direction 0 is 
categorical (and generally realised by ordinary composition of mappings); it is written as 
$ gf = f +_0 g$, with identities $ 1_x = \id (x) = e_0(x)$.
	
	The {\em transversal category} $\, \tv_\i (\iA)$ consists of the $\i$-cubes and $\i$-maps of 
$ \iA$, with transversal composition and identities. Their family forms a multiple object in 
$ \bCat$, indexed by the positive multi-indices.
	
	Composition of $\i$-cubes and $\i$-maps in a {\em positive} direction $ i \in \i  $ (often 
realised by pullbacks, pushouts, tensor products, etc.) is written in additive notation
%
    \begin{equation} \begin{array}{lr}
x +_i y   &   (\ddp_ix = \ddm_iy),
\\[5pt]
f +_i g\c x +_i y \to x' +_i y'   ~~~  &    (f\c x \to x', \,  g\c y \to y', \, \ddp_if = \ddm_ig).
    \label{1.1.3} \end{array} \end{equation}

	The transversal composition has a strict interchange with each of the positive operations.
The latter are {\em categorical} and satisfy the {\em interchange law} up to 
transversally-invertible comparisons (for $ 0 < i < j$, see \cite{GP8}, Section 3.2)
%
    \begin{equation} \begin{array}{l}
\la_ix\c (e_i\ddm_ix) +_i x \to_0 x   \qqq \qq~  (\text{{\em left i-unitor}}),
\\[5pt]
\rho_ix\c x +_i (e_i\ddp_ix) \to_0 x  \qqq \qq  (\text{{\em right i-unitor}}),
\\[5pt]
\ka_i(x, y, z)\c x +_i (y +_i z) \to_0 (x +_i y) +_i z   \qq\q  (\text{{\em i-associator}}),
\\[5pt]
\chi _{ij}(x, y, z, u)\c (x +_i y) +_j (z +_i u) \to_0 (x +_j z) +_i (y +_j u)
\\[3pt]
\qqq \qqq \qq\q  (\text{{\em ij-interchanger}}).
    \label{1.1.4} \end{array} \end{equation}

	The comparisons are natural with respect to transversal maps; $ \la_i, \rho_i $ and 
$ \ka_i $ are special in direction $ i $ (i.e.\ their $i$-faces are transversal identities) while 
$ \chi_{ij} $ is special in both directions $ i, j$; all of them commute with $ \dda_k $ for 
$ k \neq i $ (or $ k \neq i, j $ in the last case). Finally the comparisons must satisfy various 
conditions of coherence, listed in \cite{GP8}, Sections 3.3 and 3.4.
	
	More generally for a {\em chiral multiple category} $ \iA $ the $ij$-interchangers 
$ \chi_{ij} $ are not assumed to be invertible (see \cite{GP8}, Section 3.7).
	
	Even more generally, in an {\em intercategory} we also have $ij$-interchangers 
$ \mu_{ij}, \de_{ij}, \tau_{ij}$ involving the units; this extension is studied in \cite{GP6, GP7} for 
the 3-dimensional case, the really important one. The infinite dimensional case is introduced 
in \cite{GP8}, Section 5, but lacks examples of interests and has a marginal position in 
[GP8 - GP10], as well as here.
	
	Lax multiple functors and their transversal transformations are defined in \cite{GP8}, Section 3.9.

\subsection{A framework of higher quintets}\label{1.2}
We start from a 2-category $ \bC $ and construct a multiple category $ \iM = \iQ(\bC) $ 
{\em of (higher) quintets} over $ \bC$, extending the double category $ \Q(\bC) $ of quintets 
introduced by C. Ehresmann (see \cite{GP1}, Section 1.3).

\Ndt  (a) The objects of $ \iM $ are those of $ \bC$; in every direction $ i \ge 0$, an $i$-cell 
$ f\c X \to_i Y $ is a $\bC$-morphism. They form the category $\cat_i(\iM) $ underlying $ \bC$.

\Ndt  (b) In dimension 2, an $ij$-cell (for $ 0 \le i < j$) is a {\em quintet} of $ \bC$, consisting of 
four morphisms and a 2-cell $ \ph$
%
    \begin{equation} \begin{array}{c} 
    \xymatrix  @C=12pt @R=6pt
{
~A~   \ar[rr]^-{r}   \ar[dd]_-{u}   &   \ar@/^/[ld]^\ph   &   ~B~   \ar[dd]^-{v}  &&&&
\bu   \ar[r]^{i}   \ar[d]^{j}  &
\\ 
&&&&   \ph\c vr \to su\c A \to D.   &&
\\ 
~C~   \ar[rr]_-{s}    &&   ~D~
}
    \label{1.2.1} \end{array} \end{equation}

	These cells have two obvious composition laws, in directions $ i $ and $ j$, and form a 
double category $ \dbl_{ij}(\iM)$; it is the same as the double category $ \Q(\bC)$, `displayed' 
in directions $ i $ and $ j$.

\Ndt  (c) In dimension 3 an $ijk$-cell (for $ 0 \le i < j < k$) is a `commutative cube' $ \Pi $ of 
quintets
%
    \begin{equation}
    \begin{array}{c}  \xymatrix  @C=8pt   @R=15pt
{
~\bu~   \ar[rrr]^-r    \ar[dd]_x   \ar[rd]^>>>>u  &&&
~\bu~     \ar[rd]^-v   &&&&
~\bu~    \ar[rrr]^r   \ar[dd]_-x  &&&
~\bu~    \ar[dd]_-{x'}   \ar[rd]^-v   
\\ 
& ~\bu~    \ar[rrr]^-s   \ar[dd]^y \arv[rru]|-\ph  &&&   
~\bu~  \ar[dd]^-{y'}  &&&&&&&   ~\bu~  \ar[dd]^-{y'}   &&&
~\bu~  \ar[rr]^>>>>i   \ar[rd]^>>>>j  \ar[d]_<<<<{k\,}  &&
\\ 
~\bu~  \ar[rd]_{u'}  \arv[ru]|-\om  &&&&&&&
~\bu~ \ar[rrr]^-{r'}    \ar[rd]_{u'}    \arv[rrruu]|-\pi  \arv[rrrrd]|-\psi  &&&
~\bu~  \ar[rd]_<{v'}  \arv[ru]|-\ze   &&&&&
\\ 
&   ~\bu~    \ar[rrr]_-{s'}  \arv[rrruu]|-\rho  &&&  ~\bu~   &&&
&   ~\bu~    \ar[rrr]_-{s'}    &&&  ~\bu~ 
}
    \label{1.2.2} \end{array} \end{equation}

	More precisely, we have six quintets
    \begin{equation*} \begin{array}{llr}
\ph\c vr \to su,   \q  &   \psi \c v'r' \to s'u'   \q~~   &   \text{(two $ij$-cells, the faces  $\dda_k\Pi$),}
\\[5pt]
\pi\c x'r \to r'x,   &   \rho\c y's \to s'y   &   \text{(two $ik$-cells, the faces $\dda_j \Pi$),}
\\[5pt]
\om\c yu \to u'x,   &  	\ze\c y'v \to v'x'   &   \text{(two $jk$-cells, the faces $ \dda_i \Pi$).}
    \label{} \end{array} \end{equation*}
which must satisfy the following commutativity relation, in $ \bC$
    \begin{equation}
y'\ph \te \rho u \te s'\om  =  \ze r \te v'\pi \te \psi x\c  y'vr \to s'u'x.
    \label{1.2.3} \end{equation}

	The compositions in directions $ i, j, k $ amount to compositions of faces in the double 
categories $ \dbl_{ij}(\iM)$, $ \dbl_{ik}(\iM) $ and $ \dbl_{jk}(\iM)$. We have thus a triple 
category $ \trp_{ijk}(\iM)$.
	
\Ndt  (d) Every cell of dimension $ n > 3 $ is an $n$-dimensional cube whose 2-dimensional 
faces are quintets, under the condition that each 3-dimensional face in direction $ ijk $ be an 
$ijk$-cell, as defined above.

\subsection{Coskeletal dimension}\label{1.3}
We want to express point (d) above saying that the multiple category $ \iQ(\bC) $ has 
{\em coskeletal dimension} 3, at most. Loosely speaking, this means that each cell of higher 
dimension is `bijectively determined' by its boundary.

	To formalise this property, we say that an $n$-{\em dimensional} 
$\N$-{\em multiple category} $ \iA $ is the `same' as a multiple category, but based on 
multi-indices $ \i  = \{i_1,..., i_k\} \sub \N $ of dimension $ k \le n$.

	There is an obvious truncation functor
    \begin{equation}
\Dtrc_n\c \bMlt \to \Dim_n\bMlt,
    \label{1.3.1} \end{equation}
defined on the category of (small) multiple categories (and strict multiple functors), with values 
in the category of $n$-dimensional $\N$-multiple categories. Its right adjoint
    \begin{equation}
\Dcosk_n\c \Dim_n\bMlt \to \bMlt,
    \label{1.3.2} \end{equation}
extends an $n$-dimensional $\N$-multiple category $ \iA$ by adding cells of every higher 
dimension, with their faces, operations and degeneracies.

	The higher cells are inductively defined as follows. If we have already defined the cells 
up to dimension $ k \ge n$, an $\i$-cell of dimension $ k+1 $ is a {\em face-consistent} 
$\i$-{\em family} $ x = (x^\al_i ) $ of $k$-dimensional cells, where
    \begin{equation} \begin{array}{lr}
x^\al_i  \; \text{ is an $\i |i$-cell}	    &    (\for  i \in \i   \and  \al = \pm),
\\[5pt]
\ddb_j x^\al_i   =  \dda_i x^\be_j     \qq   &    (\for  i \neq j  \inn  \i  \and  \al, \be = \pm).
    \label{1.3.3} \end{array} \end{equation}

	Of course the new faces are defined letting $ \dda_i (x) = x^\al_i $. Then the new 
operations and degeneracies are determined by the fact that they must be consistent with 
faces, i.e.\ satisfy the following conditions (for $ j \neq i$):
    \begin{equation} \begin{array}{c}
\ddm_i (x +_i y)  =  \ddm_i (x),  \q   \ddp_i (x +_i y)  =  \ddp_i (y),
\\[5pt]
\dda_j (x +_i y)  =  \dda_j (x) +_i \dda_j (y),
\\[5pt]
 \dda_i (e_ix)  =  x,   \qq    \dda_j (e_ix)  =  e_i\dda_j (x).
    \label{1.3.4} \end{array} \end{equation}

	Now we say that the multiple category $ \iA $ has {\em coskeletal dimension} $\le n $ if 
it is isomorphic to $\Dcosk_n\Dtrc_n(\iA)$, so that all its cells of higher dimension can be 
reconstructed from those of dimension up to $ n$, as described above. The 
{\em coskeletal dimension} of $ \iA $ is the least such natural number $ n$.
	
	Similar definitions can be given for weak or lax multiple categories. 
	
	The coskeletal dimension of the multiple category $ \iQ(\bC) $ is 3 at most, and can be less. 
For instance, if $ \bC $ is a preordered category (viewed as a locally preordered 2-category), 
then each cube of quintets \eqref{1.2.2} commutes, and is `bijectively determined' by its 
boundary, so that the coskeletal dimension of $ \iQ(\bC) $ is at most 2; moreover, if the preorder 
of $ \bC $ is codiscrete, even a quintet is `bijectively determined' by its boundary, and the 
coskeletal dimension is at most 1; finally, if $ \bC $ is a codiscrete category with codiscrete 
preorder, the coskeletal dimension of $ \iQ(\bC) $ is 0.
	
	In \cite{GP8}, Section 2.7, we have considered a different truncation functor 
$ \trc_n\c \bMlt \to \bMlt_\n$, taking values in the category of $\n$-multiple categories, 
having multi-indices $ {\i} \sub \n = \{0,..., n -1\}$. This functor and its right adjoint $ \cosk_n $ 
would give another notion of dimension, which seems to be of little interest.

\subsection{Multiple categories of generalised quintets}\label{1.4}
A {\em multiple category of generalised quintets} will be a multiple category $\iA$ equipped 
with a `forgetful' multiple functor $ U\c \iA \to \iQ(\bC) $ with values in the multiple category of 
quintets over some 2-category $ \bC$.

	More particularly we say that $ \iA $ is {\em of quintet type} over $ \bC $ when $ U $ 
satisfies the following condition of `2-dimensional faithfulness': 

\Ndt  (i) each cell $ x $ of $\iA$ of dimension 2 or higher is determined by its boundary and by 
the cell $ U(x) $ of $ \iQ(\bC)$.

\skp

	In the general case above, $U$ is just expected to capture properties of $\iA$, motivating 
our interest. Much in the same way as we may be interested in `forgetful' functors of ordinary 
categories, even if not faithful.

\subsection{Multiple categories of cubical quintets}\label{1.5}
Extending our construction of the multiple category $ \iQ(\bC) $ of quintets over a 2-category 
$ \bC$, one can start from a (strict) $n$-category $ \bC $ and form a multiple category 
$ \iQ^n(\bC)$, of coskeletal dimension $ \le n+1$, so that $ \iQ^2(\bC) $ is the previous case 
of Section \ref{1.2} while $ \iQ^1(\bC) = \iCub(\bC) $ is the multiple category of commutative cubes 
over a category (see \cite{GP8}, Section 3.5(a)).

	We only sketch the construction of $ \iQ^3(\bC)$, for a 3-category $ \bC$. For all cells of 
dimension 0, 1 and 2 we proceed as in Section \ref{1.2}, points (a), (b); then we go on as follows.

\Ndt (c$'$) In dimension 3 an $ijk$-cell $ \Pi $ (for $ 0 \le i < j < k$) is a cubical diagram of 
2-dimensional cells (its faces), as in diagram \eqref{1.2.2}, inhabited by a 3-cell of 
$ \bC $ (still written as $ \Pi$, by abuse of notation)
    \begin{equation}
\Pi\c  y'\ph \te \rho u \te s'\om \to \ze r \te v'\pi \te \psi x\c  y'vr \to s'u'x.
    \label{1.5.1} \end{equation}

	The compositions in directions $ i, j, k $ are defined by `pasting inhabited cubes' in 
$ \bC$; we get a triple category $ \trp_{ijk}(\iQ^3(\bC))$.

\Ndt  (d$'$) A cell of dimension 4 is a face-consistent family of 3-dimensional cells that forms a 
`commutative 4-dimensional cube'. Every cell of dimension $ n > 4 $ is an $n$-dimensional
 cube whose 3-dimensional faces are as in (c$'$), under the condition that each 4-dimensional 
 face be a commutative 4-cube.

	
\section{The multiple category of chiral multiple categories}\label{s2}

	In \cite{GP10}, Section 2, we have constructed the strict {\em double} category $ \Cmc $ of chiral multiple categories, with lax and colax multiple functors, and suitable double cells `of quintet type'. This construction was the basis of our definition of colax/lax adjunctions between chiral multiple categories.

	In \cite{GP6}, Section 6, we have constructed the strict {\em triple} category $ \iICat $ of 3-dimensional intercategories, whose arrows are `mixed-laxity functors': the lax triple functors in direction 1, the colax ones in direction 3 and the intermediate {\em colax-lax morphisms} in direction 2.

	We extend now these structures, forming a (strict) multiple category $ \iCmc $ of chiral 
multiple categories, indexed by the ordinal $ \bom + \one = \{0, 1,..., \infty \}$. The transversal 
arrows, or 0-{\em morphisms}, are strict multiple functors. In direction $ p $ 
(for $ 1 \le p \le \infty $), the $p$-{\em morphisms} are `functors of mixed laxity', varying from 
the lax functors (in direction 1) to the colax ones (in direction $ \infty $).

	As to notation, a chiral multiple category $ \iA $ is a multiple object of ordinary categories 
$\tv_\i (\iA) $ indexed by positive multi-indices $ \i = \{i, j, k...\} \sub \N^*$. On the other hand 
$ \iCmc $ will be indexed by `extended' positive multi-indices 
$ \p =\{p, q, r...\} \sub \{1, 2,..., \infty \}$.

	Let us note that, if we restrict $ \iCmc $ to the weak multiple categories of cubical type 
(see \cite{GP8}), we still have a multiple category of {\em non}-cubical type, with different 
kinds of arrows in each direction.

\subsection{Mixed-laxity functors}\label{2.1}
In degree 0, the objects of $ \iCmc $ are the (small) chiral multiple categories, and the 
transversal arrows (or 0-morphisms) are the strict multiple functors $ F\c \iA \to_0 \iB$.

	In degree 1 and direction $ p $ (for $ 1 \le p \le \infty $), a {\em $p$-morphism} 
$ R\c \iA \to_p \iB $ between chiral multiple categories will be a {\em mixed-laxity functor} 
which is colax in all positive directions $ i < p $ and lax in all directions $ i \ge p$. In particular, 
this is a lax functor for $ p = 1 $ and a colax functor for $ p = \infty $.

	Basically, $ R $ has components $ R_\i  = \tv_\i (R)\c \tv_\i (\iA) \to \tv_\i (\iB)$, for all 
positive multi-indices $ \i$, that are ordinary functors and commute with faces: 
$ \dda_i .R_\i  = R_{\i|i}.\dda_i  $ (for $ i \in \i$).

	Moreover $ R $ is equipped with $i$-special {\em comparison} $\i$-maps $ \R_i$  
(for  $t  \in  A_{\i|i}$ and $x, y$ $i$-consecutive in $A_\i$), either in the lax direction for $ i \ge p$
    \begin{equation}
\R_i(t)\c e_iR(t) \to_0 R(e_it),    \q    \R_i(x, y)\c R(x) +_i R(y) \to_0 R(x +_i y),
    \label{2.1.1} \end{equation}
\Ndt  or in the colax direction for $ 0 < i < p$
    \begin{equation}
\R_i(t)\c R(e_it) \to_0 e_iR(t),    \q    \R_i(x, y)\c R(x +_i y) \to_0 R(x) +_i R(y).
    \label{2.1.2} \end{equation}

	All these comparisons must commute with faces (for  $j \neq i$  in $\i$)
    \begin{equation}
\dda_j \R_i(t)  =  \R_i(\dda_j t),	  \qq   \dda_j \R_i(x, y)  =  \R_i(\dda_j x, \dda_j y).
    \label{2.1.3} \end{equation}

	Moreover they have to satisfy the axioms of naturality and coherence (see \cite{GP8}, 
Section 3.9), either in the lax form (lmf.1-4) for $i \ge p$, or in the transversally dual form 
- say (cmf.1-4) - for $ i < p$.

	Finally there is an axiom of coherence with the interchanger $ \chi_{ij} $ $(0 < i < j) $ 
which has three forms (where (a) corresponds to (lmf.5), (c) corresponds to its dual (cmf.5) 
and (b) is an intermediate case):

\Ndt  (a) for $ p \le i < j $ (so that $ R $ is $i$- and $j$-lax), we have commutative diagrams 
of transversal maps:
    \begin{equation} \begin{array}{c} 
    \xymatrix  @C=25pt @R=15pt
{
~(Rx +_i Ry) +_j (Rz +_i Ru)~   \ar[r]^-{\chi_{ij} R}   \ar[d]_-{\R_i +_j \R_i}    &
~(Rx +_j Rz) +_i (Ry +_j Ru)~   \ar[d]^-{\R_j +_i \R_j}
\\ 
~R(x +_i y) +_j R(z +_i u)~   \ar[d]_-{\R_j}    &   
~R(x +_j y) +_i R(z +_j u)~   \ar[d]^-{\R_i}
\\ 
~R((x +_i y) +_j (z +_i u))~   \ar[r]_-{R\chi_{ij}}    &   ~R((x +_j z) +_i (y +_j u))~
}
    \label{2.1.4} \end{array} \end{equation}

\Ndt (b) for $ 0 < i < p \le j $ (so that $ R $ is $i$-colax and $j$-lax), we have commutative diagrams:
    \begin{equation} \begin{array}{c} 
    \xymatrix  @C=35pt @R=45pt  @H=15pt   @!0
{
& (Rx +_i Ry) +_j (Rz +_i Ru)   &&    \ar[r]^-{\chi_{ij} R}    &&&
(Rx +_j Rz) +_i (Ry +_j Ru)   \ar[rd]^-{\R_j +_i \R_j}
\\
R(x +_i y) +_j R(z +_i u)  \ar[ru]^-{\R_i +_j \R_i}   \ar[rd]_-{\R_j}   &&&&&&&       
R(x +_j y) +_i R(z +_j u)
\\
&R((x +_i y) +_j (z +_i u))    &&    \ar[r]_-{R\chi_{ij}}    &&&    
R((x +_j z) +_i (y +_j u))    \ar[ru]_-{\R_i}
}
    \label{2.1.5} \end{array} \end{equation}

\Ndt  (c) for $ 0 < i < j < p $ (so that $ R $ is $i$- and$j$-colax), we have commutative diagrams as in \eqref{2.1.4}, with all vertical arrows reversed.

\skp

	The composition of $p$-morphisms is easily defined: their comparisons are separately composed.
	
	Finally, a transversal map $ (F, G)\c R \to_0 S $ of $p$-arrows will be a commutative square
    \begin{equation} \begin{array}{c} 
    \xymatrix  @C=15pt @R=5pt
{
~\bu~   \ar[rr]^-{F}   \arDb_-{R}   &&   ~\bu~   \arDb^-{S}  &&&&
\bu   \ar[r]^{0}   \ar[d]^{p}  &
\\ 
&  =  &&&  SF  =  GR   &&
\\ 
~\bu~   \ar[rr]_-{G}    &&   ~\bu~
}
    \label{2.1.6} \end{array} \end{equation}
with strict functors $ F, G $ and $p$-morphisms $ R, S$. Commutativity means that 
$ SF = GR $ {\em as} $p$-morphisms, including comparisons.

	(Let us recall that, as already remarked in \cite{GP6}, the `lax-colax' case makes no sense: 
modifying diagram \eqref{2.1.4} by reversing all arrows $ \R_j$ would lead to a diagram where 
no pairs of arrows compose.)

	We have thus defined the double category $ \dbl_{0p}(\iCmc) $ of chiral multiple 
categories, strict functors and $p$-morphisms.

\subsection{Two-dimensional cubes}\label{2.2}
To define a $pq$-cube (for $ 1 \le p < q \le \infty $) we have to adapt the axioms of transversal 
transformation (again in \cite{GP8}, Section 3.9).

\skp

	A $pq$-{\em cube} $ \ph\c (U \; ^R_S  \; V) $ will be a `generalised quintet' consisting of 
two $p$-morphisms $ R, S$, two $q$-morphisms $ U, V$, together with - roughly speaking - 
a `transversal transformation' $ \ph\c VR \todash SU$
%
    \begin{equation} \begin{array}{c} 
    \xymatrix  @C=15pt @R=5pt
{
~\iA~   \arRb^-{R}   \arDb_-{U}   &   \ar@{-->}@/^/[ld]^-\ph   &   ~\bu~   \arDb^-{V}  &&&&
\bu   \ar[r]^{p}   \ar[d]^{q}  &
\\
&&&&   \ph\c VR \todash SU.  &&
\\ 
~\bu~   \arRb_-{S}    &&   ~\iB~ 
}
    \label{2.2.1} \end{array} \end{equation}

	Again (as in \cite{GP10}, Section 2, for the double category $ \Cmc$) this is an abuse 
of notation since there are no composites $ VR $ and $ SU $ in our structure: the coherence 
conditions of $ \ph $ are based on the four morphisms $ R, S, U, V $ and all their comparison 
maps. Precisely, the cell $ \ph $ consists of a face-consistent family of transversal maps in 
$ \iB$
    \begin{equation} \begin{array}{lr}
\ph(x) = \ph_\i (x)\c VR(x) \to_0 SU(x),    \q   &    \text{(for every  $\i$-cube $x$ of $\iA$),}
\\[5pt]
\dda_i .\ph_\i   =  \ph_{\i|i}.\dda_i    &    (\for  i \in \i),
    \label{2.2.2} \end{array} \end{equation}
so that each component $ \ph_\i \c V_\i R_\i  \to S_\i U_\i \c \tv_\i (\iA) \to \tv_\i (\iB) $ is a 
natural transformation of ordinary functors:

\Ndt  (nat) for all $ f\c x \to_0 y $ in $ \iA$, we have a commutative diagram of transversal maps in $ \iB$
    \begin{equation} \begin{array}{c} 
    \xymatrix  @C=10pt @R=8pt
{
~VR(x)~   \ar[rr]^-{\ph x}   \ar[dd]_-{VRf}   &&   ~SU(x)~   \ar[dd]^-{SUf}
\\ 
&  = 
\\ 
~VR(y)~   \ar[rr]_-{\ph y}    &&   ~SU(y)~
}
    \label{2.2.3} \end{array} \end{equation}

	Moreover $ \ph  $ has to satisfy the following coherence conditions (coh.a), (coh.b), 
(coh.c) with the comparisons of $ R, S, U, V$, for a degenerated cube 
$ e_i(t)$ (with $t \in A_{\i|i}$) and a composite $ z = x +_i y $ in $ A_\i $.

\Ndt  (coh.a) If $ p < q \le i $ (so that $ R, S, U, V $ are lax in direction $ i$), we have commutative diagrams:
    \begin{equation} \begin{array}{c} 
    \xymatrix  @C=28pt @R=20pt
{
~ e_iVR(t)~   \ar[r]^-{e_i(\ph t)}   \ar[d]_-{\V_i(Rt)}    &
~e_iSU(t)~   \ar[d]^-{\uS_i(Ut)}   &
~VRx +_i VRy~   \ar[r]^-{\ph x +_i \ph y}   \ar[d]_-{\V_i(Rx, Ry)}    &
~SUx +_i SUy~   \ar[d]^-{\uS_i(Ux, Uy)}
\\ 
~V(e_iRt)~   \ar[d]_-{V\R_i(t)}    &   
~S(e_iUt)~   \ar[d]^-{S\U_i(t)}    &
~V(Rx +_i Ry)~   \ar[d]_-{V\R_i(x, y)}    &   
~S(Ux +_i Uy)~   \ar[d]^-{S\U_i(x, y)}
\\ 
~VR(e_it)~   \ar[r]_-{\ph (e_it)}    &   ~SU(e_it)~   &
~VR(z)~   \ar[r]_-{\ph (z)}    &   ~SU(z)~
}
    \label{2.2.4} \end{array} \end{equation}

\Ndt  (coh.b) If $ p \le i < q $ (so that $ R, S $ are lax and $ U, V $ are colax in direction $ i$), 
we have commutative diagrams:
    \begin{equation} \begin{array}{c} 
    \xymatrix  @C=28pt @R=20pt
{
~ e_iVR(t)~   \ar[r]^-{e_i(\ph t)}    &
~e_iSU(t)~   \ar[d]^-{\uS_i(Ut)}   &
~VRx +_i VRy~   \ar[r]^-{\ph x +_i \ph y}      &
~SUx +_i SUy~   \ar[d]^-{\uS_i(Ux, Uy)}
\\ 
~V(e_iRt)~   \ar[d]_-{V\R_i(t)}   \ar[u]^-{\V_i(Rt)}    &   
~S(e_iUt)~     &
~V(Rx +_i Ry)~   \ar[d]_-{V\R_i(x, y)}   \ar[u]^-{\V_i(Rx, Ry)}  &   
~S(Ux +_i Uy)~ 
\\ 
~VR(e_it)~   \ar[r]_-{\ph (e_it)}    &   ~SU(e_it)~    \ar[u]_-{S\U_i(t)} &
~VR(z)~   \ar[r]_-{\ph (z)}    &   ~SU(z)~    \ar[u]_-{S\U_i(x, y)}
}
    \label{2.2.5} \end{array} \end{equation}

\Ndt  (coh.c) If $ i < p < q $ (so that $ R, S, U, V $ are colax in direction $ i$), we have commutative diagrams as in \eqref{2.2.4}, with all vertical arrows reversed.

	The $p$- and $q$-composition of these cubes are both defined using componentwise 
the transversal composition of a chiral multiple category. Namely, for a consistent 
matrix of $pq$-cubes and $x \in \iA$
%
    \begin{equation}
    \begin{array}{c}  \xymatrix  @C=12pt @R=6pt  
{
~\bu~  \arRb^R   \arDb_{U\,}  &&  
~\bu~   \arRb^{R'}  \arDb^{\, V}  &&  ~\bu~  \arDb^{\, W}
\\
&  \ph  &&  \psi   &&&&   \bu   \ar[r]^{p}   \ar[d]^{q}  &
\\
~\bu~  \ar[rr]|{~S~}  \arDb_{U'\,}  &&  
~\bu~   \ar[rr]|{~S'~} \arDb^{\, V'}  &&  ~\bu~ \arDb^{\, W'}  &&&
\\
&  \si  &&  \tau  &&&&&
\\
~\bu~ \arRb_T     &&  ~\bu~ \arRb_{T'}     &&  ~\bu~   
}
    \label{2.2.6} \end{array} \end{equation}
    \begin{equation} \begin{array}{lr}
(\ph  +_p \psi )(x)  =  \psi (Rx) +_0 S'(\ph x)\c  WR'Rx \to S'VRx \to S'SUx,
\\[5pt]
(\ph  +_q \si)(x)  =  V'(\ph x) +_0 \si(Ux)\c  V'VRx \to V'SUx \to TU'Ux.
    \label{2.2.7} \end{array} \end{equation}

\skp

	We prove below, in Theorem \ref{2.8}, that these composition laws are well-defined, 
i.e.\ the cells above do satisfy the previous coherence conditions. Moreover, they have been 
defined via the composition of transversal maps, and therefore are strictly unitary and associative.
	
	Finally, to verify the middle-four interchange law on the four double cells of diagram 
 \eqref{2.2.6}, we compute the composites $ (\ph  +_p \psi ) +_q (\si +_p \tau) $ and 
$ (\ph  +_q \si) +_p (\psi  +_q \tau) $ on an $\i$-cube $ x$, and we obtain the two transversal 
maps $ W'WR'Rx \to_0 T'TU'Ux $ of the upper or lower path in the following diagram
    \begin{equation} \begin{array}{c} 
    \xymatrix  @C=25pt @R=20pt
{
~W'WR'Rx~   \ar[r]^-{W'\psi Rx}    &
~W'S'VRx~   \ar[r]^-{W'S'\ph x}   \ar[d]_-{ \tau VRx}   \arv[rd]|-=    &
~W'S'SUx~   \ar[d]^-{\tau SUx} 
\\ 
&    ~T'V'VRx~   \ar[r]_-{T'V'\ph x}    &   ~T'V'SUx~   \ar[r]_-{T'\si Ux}    &   ~T'TU'Ux~
}
    \label{2.2.8} \end{array} \end{equation}

	The square commutes, by naturality of the double cell $ \tau  $ (with respect to 
the transversal map $ \ph x\c VR(x) \to_0 SU(x))$, so that the two composites coincide.

\subsection{Transversal maps of degree two}\label{2.3}
Given two $pq$-cubes
    \begin{equation}
\ph\c (U \; ^R_S  \; V),   \qq   \ph'\c (U' \; ^{R'}_{S'}  \; V')
    \label{2.3.1} \end{equation}
a transversal $pq$-map $ (F, G, F', G')\c \ph  \to_0 \ph' $ (of degree two and dimension three) 
is a quadruple of strict functors forming four transversal maps of degree 1
    \begin{equation} \begin{array}{ccc}
(F, G)\c R \to_0 R',   &~~&   (F', G')\c S \to_0 S',
\\[5pt]
 (F, F')\c U \to_0 U',   &&   (G, G')\c V \to_0 V',
    \label{2.3.2} \end{array} \end{equation}
%
%
    \begin{equation*}
    \begin{array}{c}  \xymatrix  @C=8pt   @R=15pt
{
~\iA~   \ar[rrr]^-R    \ar[dd]_U   \ar[rd]^>>>>F  &&&
~\bu~     \ar[rd]^-G   &&&&
~\iA~    \ar[rrr]^R   \ar[dd]_-U   &  \ar@{-->}@/^/[ld]^-\ph   &&
~\bu~    \ar[dd]_-{V}   \ar[rd]^-G
\\ 
& ~\bu~    \ar[rrr]^-{R'}   \ar[dd]_-{U'} \arv[rru]|-=  &   \ar@{-->}@/^/[ld]^-{\ph'}  &&   
~\bu~  \ar[dd]^-{V'}  &&&&&&&   ~\bu~  \ar[dd]^-{V'}   &&&
~\bu~  \ar[rr]^-p   \ar[rd]^>>>>0  \ar[d]_<<<<{q\,}  &&
\\ 
~\bu~  \ar[rd]_{F'}  \arv[ru]|-=  &&&&&&&
~\bu~ \ar[rrr]^-{R'}    \ar[rd]_{F'}    \arv[rrrrd]|-=  &&&
~\bu~  \ar[rd]_<{G'}  \arv[ru]|-=   &&&&&
\\ 
&   ~\bu~    \ar[rrr]_-{S'}   &&&  ~\iB~   &&&
&   ~\bu~    \ar[rrr]_-{S'}    &&&  ~\iB~ 
}
    \label{2.3.2bis} \end{array} \end{equation*}
and such that `the cube commutes', in the sense that, for every $\i$-cube $ x $ of $ \iA$, 
the following transversal maps of $\iB$ coincide
    \begin{equation}
G'(\ph x)\c G'VR(x) \to G'SU(x),   \qq   \ph'(Fx)\c VR'F(x) \to S'U'F(x).
    \label{2.3.3} \end{equation}

	We have thus defined the triple category $ \trp_{0pq}(\iCmc) $ of chiral multiple 
categories, with strict functors and $p$- and $q$-morphisms (for $ 0 < p < q \le \infty $). Its 
indices vary in the pointed ordered set $ \{0, p, q\}$.

\subsection{Three-dimensional cubes}\label{2.4}
A $pqr$-{\em cube} (for $ 0 < p < q < r \le \infty $) will be a `commutative cube' $ \Pi$, 
coskeletally determined by its six faces:

\Ndt  - two $pq$-cubes $ \ph, \psi  \, $ (the faces $ \dda_r \Pi$),

\Ndt  - two $pr$-cubes $ \pi, \rho  \, $ (the faces $ \dda_q \Pi$),

\Ndt  - two $qr$-cubes $ \om, \ze  \, $ (the faces $ \dda_p \Pi$),
%
    \begin{equation}
    \begin{array}{c}  \xymatrix  @C=8pt   @R=15pt
{
~\iA~   \ar[rrr]^-R    \ar[dd]_X   \ar[rd]^>>>>U  &&&
~\bu~     \ar[rd]^-V   &&&&
~\iA~    \ar[rrr]^R   \ar[dd]_-X  &&&
~\bu~    \ar[dd]_-{X'}   \ar[rd]^-V
\\ 
& ~\bu~    \ar[rrr]^-S   \ar[dd]^Y \arv[rru]|-\ph  &&&   
~\bu~  \ar[dd]^-{Y'}  &&&&&&&   ~\bu~  \ar[dd]^-{Y'}   &&&
~\bu~  \ar[rr]^p   \ar[rd]^>>>>q  \ar[d]_<<<<{r\,}  &&
\\ 
~\bu~  \ar[rd]_{U'}  \arv[ru]|-\om  &&&&&&&
~\bu~ \ar[rrr]^-{R'}    \ar[rd]_{U'}    \arv[rrruu]|-\pi  \arv[rrrrd]|-\psi  &&&
~\bu~  \ar[rd]_<<<{V'}  \arv[ru]|-\ze   &&&&&
\\ 
&   ~\bu~    \ar[rrr]_-{S'}  \arv[rrruu]|-\rho  &&&  ~\iB~   &&&
&   ~\bu~    \ar[rrr]_-{S'}    &&&  ~\iB~ 
}
    \label{2.4.1} \end{array} \end{equation}

	The commutativity condition means that, for every $\i$-cube $ x $ of $ \iA$, 
the following composed transversal arrows in $ \iB$ coincide  
    \begin{equation} \begin{array}{c}
 S'\om x.\rho Ux.Y'\ph x \c  Y'VR(x)   \to   Y'SU(x)   \to   S'YU(x)   \to   S'U'X(x)
\\[5pt]
\psi Xx.V'\pi x.\ze Rx:  Y'VR(x) = V'X'R(x) = V'R'X(x) = S'U'X(x).
    \label{2.4.2} \end{array} \end{equation}

	These cubes are composed in direction $ p, q$, or $ r$, by pasting cubes (with the 
operations of 2-dimensional cubes). Again, these operations are associative, unitary and 
satisfy the middle-four interchange by pairs.

\subsection{Higher items}\label{2.5}
A transversal $pqr$-map $ F\c \Pi \to_0 \Pi' $ between $pqr$-cubes is determined by its 
boundary, a face-consistent family of six transversal maps of degree two (and dimension 
three)
    \begin{equation}
\dda_j F\c  \dda_j \Pi \to_0 \dda_j \Pi'   \qq    (\al = \pm, \; j \in \{p, q, r\}),
    \label{2.5.1} \end{equation}
under no other conditions. Their operations are computed on such faces.

	We have thus defined a quadruple category of chiral multiple categories, with strict functors 
and $p$-, $q$-, $r$-morphisms (for extended positive integers $ p < q < r$). The indices vary in 
the pointed ordered set $ \{0, p, q, r\}$.

	Finally, we have the multiple category $ \iCmc $ (indexed by the ordinal 
$ \bom + \one$), where each cell of dimension $ \ge 4 $ (starting with the transversal 
maps of degree 3 considered above and the cubes of dimension 4, not yet considered) is 
determined by a face-consistent family of all its iterated faces of dimension 3.

	In the truncated case we have the $(n+1)$-dimensional multiple category $ \iCmc_\n $ of (small) chiral 
$\n$-multiple categories, where the objects are indexed by the ordinal $\n = \{0,..., n-1\}$, while 
$ \iCmc_\n $ is indexed by $\n+\one $ (the previous $ \infty  $ being replaced by $ n$). But 
one should note that $ \iCmc_\n $ is {\em not} an ordinary truncation of $ \iCmc$,  as its 
objects too are truncated.

	$\iCmc $ is a substructure of the - similarly defined - multiple category $ \iInc $ of 
small infinite dimensional intercategories, and $ \iCmc_\n $ is a substructure of the 
$(n+1)$-dimensional multiple category $ \iInc_\n $ of small $\n$-intercategories.

\subsection{Comments}\label{2.6} 
These multiple categories are related to various double or triple categories previously 
constructed.

\Ndt (a) A chiral 1-multiple category is just a category, and $ \iCmc_\one $ is the double 
category of small categories, with commutative squares of functors as double cells.

\Ndt (b) A chiral 2-multiple category is a weak double category. Let us recall that we have already 
studied (in \cite{GP2}, Section 2) the double category $ \Dbl $ of weak double categories, with 
lax and colax functors - where double adjunctions live. Later $ \Dbl $ has been extended to a 
triple category $ \iS\Dbl $ of weak double categories, with strict, lax and colax functors 
\cite{GP8}, Section 1); in the latter all 2-dimensional cells are inhabited by possibly non-trivial 
transformations, while in $ \iCmc_2 $ the 01- and 02-cells are `commutative squares', 
inhabited by identities. Thus $ \iCmc_\two$ extends $ \Dbl $ but is a triple subcategory of 
$ \iS\Dbl$.

\Ndt (c) Multiple adjunctions live in the double category $ \Cmc $ of chiral multiple categories, 
with lax and colax multiple functors (\cite{GP10}, Section 2). This can be extended to a triple 
category $ \iS\Cmc $ of chiral multiple categories, with strict, lax and colax functors, where 
again all 2-dimensional cells are inhabited by possibly non-trivial transformations. Then 
$ \iS\Cmc $ contains the triple category obtained from $ \iCmc $ by restricting to the 
multi-indices $ \i \sub \{0, 1, \infty \}$. But there seems to be no reasonable way of 
extending $ \iCmc $ with non-trivial quintets on $0p$-cells, for $ 1 < p < \infty $.

\Ndt (d) The quadruple category $ \iInc_\thr $ of 3-dimensional intercategories is an extension 
of the triple category $ \iICat $ of \cite{GP6}, Section 6, obtained by adding strict functors in 
the transversal direction and `commutative transversal cells'.

\subsection{The quintet type}\label{2.7}
There is a multiple functor
    \begin{equation}
\tv\c \iCmc \to \iQ(\Mlt(\bCat)),
    \label{2.7.1} \end{equation}
with values in the multiple category of quintets over the 2-category of multiple objects in 
$ \bCat $ (indexed by $ \N^*$), which sends:

\Ndt  - a chiral multiple category $ \iA $ to the multiple object $ \tv(\iA) = (\tv_\i \iA)_{\i  \sub \N^*}$ 
of its transversal categories,

\ndt  - a $p$-morphism $ F\c \iA \to_p \iB $ to the multiple morphism 
$ \tv(F) = (F_\i \c \tv_\i \iA \to \tv_\i \iB)_\i  $ formed by its transversal components, the 
ordinary functors $ F_\i  $ (for $ p \ge 0$),

\ndt  - a $pq$-cell $ \ph \c VR \todash SU $ (as in \eqref{2.2.1} or also in \eqref{2.1.6}) to the 
multiple transformation $ \tv(\ph) = (\ph_\i \c V_\i R_\i  \to S_\i U_\i )_\i  $ formed by its 
transversal components, the ordinary natural transformations $ \ph_\i  $ (for $ 0 \le p < q$),

\ndt  - each higher cell $ \Phi $ of $ \iCmc $ to the corresponding cell of the codomain, by acting on 
all the 2-dimensional faces of $ \Phi$.

\skp

	This multiple functor makes $ \iCmc $ into a multiple category of quintet type, in the sense 
of Section 1. Moreover $ \iCmc $ has coskeletal dimension 3: note that the 3-dimensional 
cubes of \eqref{2.4.1} are determined by their boundary, {\em under} a commutativity 
condition which is not automatically satisfied.

\begin{thm}\label{2.8}
In $ \iCmc $ the composition law $ \ph +_p \psi  $ of $pq$-cubes is well-defined by the 
formula \eqref{2.2.8}
    \begin{equation}
(\ph  +_p \psi )(x)  =  \psi (Rx) +_0 S'(\ph x)\c  WR'Rx \to S'VRx \to S'SUx,
    \label{2.8.1} \end{equation}
in the sense that this family of transversal maps does satisfy the conditions 
(coh.a) - (coh.c) of \ref{2.2}.
\end{thm}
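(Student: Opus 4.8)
The plan is to separate the two easy requirements from the one that carries the content. Since $(\ph +_p \psi)(x)$ is defined in \eqref{2.8.1} purely by transversal composition $+_0$ of the maps $\psi(Rx)$ and $S'(\ph x)$, the family automatically lands in the right place, $WR'Rx \to_0 S'SUx$, and it is a face-consistent natural family: face-consistency \eqref{2.2.2} follows because $\ph,\psi$ are face-consistent and the components $R_\i, S'_\i$ commute with faces, while naturality \eqref{2.2.3} follows from that of $\ph$ and $\psi$ together with the functoriality of each $S'_\i$. So the real task is to verify the coherence conditions (coh.a)--(coh.c) for the composite $pq$-cube whose four edges are the top $R'R$, the bottom $S'S$ (composites of $p$-morphisms), and the unchanged left and right $q$-morphisms $U, W$.

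The first key step is to record how the comparisons of the composite edges decompose. For a direction $i$ the composite top $p$-morphism $R'R$ has comparison obtained by pasting those of $R$ and $R'$ --- in the lax direction $\underline{R'R}_i(t) = R'(\R_i t) +_0 \R'_i(Rt)$ when $i \ge p$, and in the colax direction for $i < p$ --- and similarly $\underline{S'S}_i$ factors through $\uS'_i$ and $\uS_i$, while the $q$-morphisms $U$ and $W$ keep their own comparisons $\U_i, \W_i$. Substituting these factorizations into the single coherence square that the composite must satisfy (the instance of \eqref{2.2.4}, or \eqref{2.2.5}, built from $R'R, S'S, U, W$) replaces it by a larger diagram whose outer boundary is exactly the one we need.

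The second key step is to exhibit this enlarged diagram as a pasting of four squares that we already know to commute: (i) the coherence square of $\psi$ (its own instance of (coh.a)--(coh.c)) evaluated at the cube $Rt$, resp.\ at $Rx, Ry$; (ii) the naturality square \eqref{2.2.3} of $\psi$ taken with respect to the comparison transversal map $\R_i(t)$ (resp.\ $\R_i(x,y)$), which moves the degenerate/composite cube $e_iRt$ along to $R(e_it)$; (iii) the naturality square of the comparison $\uS'_i$ of the $p$-morphism $S'$ taken with respect to the transversal map $\ph t$ (resp.\ $\ph x +_i \ph y$), which lets $e_i$ slide past $S'$; and (iv) the image under the functor $S'$ of the coherence square of $\ph$ at $t$ (resp.\ at $x, y$). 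On the shared internal edges the comparison $\V_i$ of the cancelled middle $q$-morphism $V$ appears once from (i) and once from (iv) and matches, so the four squares glue along their common edges, and the composite of their boundaries is precisely the coherence square demanded of $\ph +_p \psi$.

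The main obstacle I expect is purely the bookkeeping of this gluing: one must correctly identify the intermediate objects ($e_iR'Rt$, $WR'(e_iRt)$, $e_iS'VRt$, $S'(e_iVRt)$, $S'VR(e_it)$, and so on) and check that the four squares above share exactly these edges with the correct orientations. The three cases require no separate arguments: passing from (coh.a) to (coh.b) to (coh.c) only reverses the comparison arrows of the edges that are colax in direction $i$ (the $q$-morphisms $U, W$ when $i < q$, and all edges when $i < p$), and since the gluing argument invokes nothing beyond the commutativity of the four constituent squares, it is insensitive to these reversals. The $+_q$ composition of \eqref{2.2.7} is treated by the symmetric argument, exchanging the roles of the $p$- and $q$-directions.
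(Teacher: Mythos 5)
Your proposal is correct and follows essentially the same route as the paper: the paper's diagram for (coh.a) decomposes into exactly your four pieces --- the coherence hexagon of $\psi$ at $Rx, Ry$, the naturality of $\psi$ on the comparison $\R_i(x,y)$, the naturality (lmf.2) of $\uS'_i$ on $\ph x, \ph y$, and $S'$ applied to the coherence hexagon of $\ph$ --- glued along the shared edge carrying $S'\V_i$. The paper likewise disposes of (coh.b) by reversing the $\U_i, \V_i, \W_i$ arrows with the two rectangles unchanged, and of (coh.c) by transversal duality, matching your closing remarks.
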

\begin{proof}
The argument is an extension of a similar one for the double category $ \Dbl $ in 
\cite{GP2}, Section 2, or for the double category $ \Cmc $ in \cite{GP10}, Section 2, 
taking into account the mixed laxity of the present `functors'. We prove the three coherence 
axioms with respect to a composed cube $ z = x +_i y $ in $ A_\i $; one would work in a 
similar way for a degenerate cube $e_i(t)$, with $ t \in A_{\i|i}$.

	First we prove (coh.a), letting $ p < q \le i$, so that all our functors $ R, R', S, S', U, V, W $ 
are lax in direction $ i$. This amounts to the commutativity of the outer diagram 
below, formed of transversal maps (the index $ i $ being omitted in $ +_i$ and in all 
comparisons $ \R_i, \R'_i$, etc.)
%
    \begin{equation*} \begin{array}{c} 
    \xymatrix  @C=50pt @R=20pt
{
WR'Rz~  \ar[r]^-{\psi Rz}  & ~S'VRz~  \ar[r]^-{S'\ph z}  & ~S'SUz 
\\ 
WR'(Rx + Ry)~   \ar[r]^-{\psi (Rx + Ry)}  \ar[u]^-{WR'\R}  &
~S'V(Rx + Ry)   \ar[u]_-{S'V\R}   &  S'S(Ux + Uy)   \ar[u]_-{S'S\U}
\\ 
W(R'Rx + R'Ry)   \ar[u]^-{W\R'R}   &  
S'(VRx + VRy)~  \ar[r]^-{S'(\ph x + \ph y)}   \ar[u]_-{S'\V R}  &  ~S'(SUx + SUy)  \ar[u]_-{S'\uS U}
\\ 
WR'Rx + WR'Ry~   \ar[r]_-{\psi Rx + \psi Ry}  \ar[u]_-{\W R'R}   &
~S'VRx + S'VRy~  \ar[r]_-{S'\ph x + S'\ph y}  \ar[u]_-{\uS'VR}  &
~S'SUx + S'SUy  \ar[u]_-{\uS' SU}
}
    \label{2.8.2} \end{array} \end{equation*}

	Indeed, the two hexagons commute by (coh.a), applied to $ \ph  $ and $ \psi $, 
respectively. The upper rectangle commutes by naturality of $ \psi  $ on $ \R_i(x, y)$. 
The lower rectangle commutes by axiom (lmf.2) \cite{GP8}, Section 3.9, on the lax 
functor $ S'$, with respect to the transversal $\i$-maps $ \ph x\c VR(x) \to_0 SU(x) $ and 
$ \ph y\c VR(y) \to_0 SU(y)$
    \begin{equation}
	S'(\ph x +_i \ph y).\uS'_i(VR(x), VR(y))  =  \uS'_i(SU(x), SU(y)).(S'(\ph x) +_i S'(\ph y)).
    \label{2.8.3} \end{equation}

	The proof of (coh.c) is transversally dual to the previous one. 
	To prove (coh.b) we let $ p \le i < q$, so that $ R, R', S, S' $ are lax, while $ U, V, W $ are colax in direction $ i$. We reverse the comparisons $ \U_i, \V_i, \W_i $ in the diagram above 
%
    \begin{equation*} \begin{array}{c} 
    \xymatrix  @C=50pt @R=20pt
{
WR'Rz~  \ar[r]^-{\psi Rz}  & ~S'VRz~  \ar[r]^-{S'\ph z}  & ~S'SUz  \ar[d]^-{S'S\U} 
\\ 
WR'(Rx + Ry)~   \ar[r]^-{\psi (Rx + Ry)}  \ar[u]^-{WR'\R}  &
~S'V(Rx + Ry)   \ar[u]_-{S'V\R}     \ar[d]^-{S'\V R}  &  S'S(Ux + Uy)
\\ 
W(R'Rx + R'Ry)   \ar[u]^-{W\R'R}  \ar[d]_-{\W R'R}   &  
S'(VRx + VRy)~  \ar[r]^-{S'(\ph x + \ph y)}  &  ~S'(SUx + SUy)  \ar[u]_-{S'\uS U}
\\ 
WR'Rx + WR'Ry~   \ar[r]_-{\psi Rx + \psi Ry}   &
~S'VRx + S'VRy~  \ar[r]_-{S'\ph x + S'\ph y}  \ar[u]_-{\uS'VR}  &
~S'SUx + S'SUy  \ar[u]_-{\uS' SU}
}
    \label{2.8.4} \end{array} \end{equation*}
and note that the two hexagons commute, by (coh.b) on $ \ph  $ and $ \psi $, 
while the rectangles are unchanged.

\end{proof}

	
\section{Generalised quintets for chains of adjunctions}\label{s3}
	We now construct a non-cubical multiple category of generalised quintets, out of a 
sequence of 2-categories and 2-functors. Then we deduce a non-cubical multiple category 
$ \iAdj(\bC) $ where an $i$-directed arrow is a {\em chain of adjunctions} 
$ u_0 \adj u_1 \adj ... \adj u_i $ of length $ i$, in a fixed 2-category $ \bC$.

	Examples of unbounded chains of adjunctions can be found in \cite{Bo}.

\subsection{A multiple category derived from a sequence of 2-categories}\label{3.1}
We start from a sequence $ \cC $ of 2-functors of 2-categories, {\em which are supposed to be 
the identity on a common set of objects} $ S$
    \begin{equation} \begin{array}{c} 
    \xymatrix  @C=20pt @R=20pt
{
(\cC)   &&&&   ~...~   \ar[r]     &   ~\bC_i~   \ar[r]^-{U_i}     &   ~~...~~  \ar[r]  &
~\bC_2~    \ar[r]^-{U_2} &   ~\bC_1~    \ar[r]^-{U_1} &   ~\bC_0~
}
    \label{3.1.1} \end{array} \end{equation}
    \begin{equation*}
U_{ji}  =  U_{i+1} ... U_j\c \bC_j \to \bC_i    \qqq    (0 \le  i < j),
    \label{3.1.1bis} \end{equation*}

\Ndt  and we construct a multiple category $ \iM = \iGQ(\cC ) $ of {\em generalised quintets} over 
the sequence $ \cC$, or {\em quintets modulo} $ U$. The 2-functors $ U_{ji} $ are called the 
{\em forgetful functors} of $ \cC $.

\Ndt  (a) The set of objects of $ \iM $ is $ S$. In dimension 1, an $i$-cell $ u\c X \to_i Y $ is a 
$\bC_i$-morphism $ (i \ge 0)$. They form the category $\cat_i(\iM) $ underlying $\bC_i$.

\Ndt  (b) In dimension 2 an $ij$-cell (for $ 0 \le i < j$) is a $U$-{\em quintet}, or 
{\em quintet modulo} $ U_{ji}\c \bC_j \to \bC_i$. It amounts to: two $\bC_i$-morphisms 
$ r, s $ (its $j$-faces), two $\bC_j$-morphisms $ u, v $ (its $i$-faces) and a 2-cell 
$ \ph  $ in $ \bC_i $
%
    \begin{equation} \begin{array}{c} 
    \xymatrix  @C=12pt @R=5pt
{
~A~   \ar[rr]^-{r}   \ar[dd]_-{u}   &&   ~B~   \ar[dd]^-{v}    &&&&
\bu   \ar[r]^i   \ar[d]^j  &
\\ 
& \ph  &&&  \ph \c (U_{ji}v)r \to s(U_{ji}u)\c A \to D.  &&&
\\ 
~C~   \ar[rr]_-{s}    &&   ~D~
}
    \label{3.1.2} \end{array} \end{equation}

	Such an $ij$-cell has an obvious underlying cell $ |\ph | $ in the double category 
$ \Q(\bC_i) $ of quintets over $ \bC_i$. $U$-quintets inherit from the latter two composition laws 
in directions $ i $ and $ j$, and form a double category $ \dbl_{ij}(\iM) $ with a cellwise-faithful 
double functor $ \dbl_{ij}(\iM) \to \Q(\bC_i)$.
	
\Ndt  (c) In dimension 3 an $ijk$-cell (for $ 0 \le i < j < k$) is a cube of $U$-quintets whose image 
in $ \bC_i $ commutes
%
    \begin{equation}
    \begin{array}{c}  \xymatrix  @C=8pt   @R=15pt
{
~\bu~   \ar[rrr]^-r    \ar[dd]_x   \ar[rd]^>>>>u  &&&
~\bu~     \ar[rd]^-v   &&&&
~\bu~    \ar[rrr]^r   \ar[dd]_-x  &&&
~\bu~    \ar[dd]_-{x'}   \ar[rd]^-v
\\ 
& ~\bu~    \ar[rrr]^-s   \ar[dd]^y \arv[rru]|-\ph  &&&   
~\bu~  \ar[dd]^-{y'}  &&&&&&&   ~\bu~  \ar[dd]^-{y'}   &&&
~\bu~  \ar[rr]^>>>>i   \ar[rd]^>>>>j  \ar[d]_<<<<{k\,}  &&
\\ 
~\bu~  \ar[rd]_{u'}  \arv[ru]|-\om  &&&&&&&
~\bu~ \ar[rrr]^-{r'}    \ar[rd]_{u'}    \arv[rrruu]|-\pi  \arv[rrrrd]|-\psi  &&&
~\bu~  \ar[rd]_<{v'}  \arv[ru]|-\ze   &&&&&
\\ 
&   ~\bu~    \ar[rrr]_-{s'}  \arv[rrruu]|-\rho  &&&  ~\bu~   &&&
&   ~\bu~    \ar[rrr]_-{s'}    &&&  ~\bu~ 
}
    \label{3.1.3} \end{array} \end{equation}

   More precisely, we have six $U$-quintets determined by 2-cells in $ \bC_i $ or $ \bC_j $
    \begin{equation*} \begin{array}{lllr}
\text{- $ij$-cells:}    &    \ph\c (U_{ji}v)r \to s(U_{ji}u),  &
\psi \c (U_{ji}v')r' \to s'(U_{ji}u')    &    \text{(2-cells in }  \bC_i),
\\[5pt]
\text{- $ik$-cells:}   &    \pi \c (U_{ki}x')r \to r'(U_{ki}x),    &    
\rho \c (U_{ki}y')s \to s'(U_{ki}y)    &    \text{(2-cells in }  \bC_i),
\\[5pt]
\text{- $jk$-cells:}    &    \om \c (U_{kj}y)u \to u'(U_{kj}x),    &  
\ze \c (U_{kj}y')v \to v'(U_{kj}x')    &    \text{(2-cells in }  \bC_j).
    \label{3.1.4} \end{array} \end{equation*}

	They must satisfy the following commutativity relation, in $ \bC_i $
    \begin{equation*}
Uy'.\ph \te \rho .Uu \te s'.U\om  =  U\ze .r \te Uv'.\pi  \te \psi .Ux\c  Uy'.Uv.r \to s'.Uu'.Ux,
    \label{3.1.5} \end{equation*}
where $ U $ stands for $ U_{ji} $ or $ U_{ki} $ when it operates on 
$ \bC_j $ or $ \bC_k$, respectively.

	The compositions in directions $ i, j, k $ amount to compositions of faces in the double 
categories $ \dbl_{ij}(\iM)$, $\dbl_{jk}(\iM) $ and $ \dbl_{ik}(\iM)$. We have thus a triple 
category $ \trp_{ijk}(\iM)$.

\Ndt  (d) Every cell of dimension $ n > 3 $ is an $n$-dimensional cube whose 2-dimensional 
faces are $U$-quintets, under the condition that each 3-dimensional face in direction $ ijk $ 
be an $ijk$-cell, as defined above.

\subsection{Comments}\label{3.2}
As a particular case, if we start from a 2-category $ \bC $ and build the sequence $ \cC $ 
of its identities, this procedure gives the multiple category $ \iQ(\bC) $ {\em of quintets over} 
$ \bC$.

	In general $ \iGQ(\cC) $ is a multiple category of generalised quintets, with respect to 
the obvious forgetful multiple functor $ U\c \iGQ(\cC) \to$ $\iQ(\bC_0)$; its coskeletal dimension 
is at most 3.

	If we assume that all the forgetful functors $ U_i $ are faithful, this multiple functor 
satisfies condition \ref{1.4}(i), and $ \iGQ(\cC) $ becomes a multiple category of quintet type, 
over the 2-category $\bC_0$.

\subsection{Chains of adjunctions}\label{3.3}
The previous construction allows us to build a multiple category $ \iAdj(\bC) $ of 
{\em chains of adjunctions} over a 2-category $ \bC$. 

	We begin by forming the 2-category $ \bC_i = \rAdj_i(\bC) $ of $i$-chains of adjunctions 
in $ \bC$, for $ i \ge 0$.

	For $ i = 0 $ we just let $ \rAdj_0(\bC) = \bC$. In general, an object of $ \rAdj_i(\bC) $ 
is an object of $ \bC$, and a morphism 
    \begin{equation}
u  =  (u_0,..., u_i; \eta _1, \ep_1,..., \eta _i, \ep_i)\c X \to_i Y,
    \label{3.3.1} \end{equation}
is a chain of adjunctions $ u_0 \adj u_1 \adj ... \adj u_i $ in  \bC,  with   
    \begin{equation} \begin{array}{l}
u_0\c X \to Y,   \q   u_1\c Y \to X,   \q   u_2\c X \to Y,...
\\[5pt]
(\eta _1\c 1_X \to u_1u_0, \;  \ep_1\c u_0u_1 \to 1_Y)\c u_0 \adj u_1  ~~~
(u_0\eta _1  \te  \ep_1u_0 = 1, \;  \eta _1u_1 \te  u_1\ep_1 = 1),
\\[5pt]
(\eta _2\c 1_Y \to u_2u_1, \;   \ep_2\c u_1u_2 \to 1_X)\c u_1 \adj u_2  ~~~
(u_1\eta _2  \te  \ep_2u_1 = 1, \;  \eta_2u_2  \te  u_2\ep_2 = 1),
\\[5pt]
...
    \label{3.3.2} \end{array} \end{equation}

	The chain will also be written as $ u = (u_0, u_1, u_2,...)$, leaving units and 
counits understood.

	The composition of $ u $ with a consecutive arrow $ v\c Y \to_i Z $ is just an 
iterated composition of adjunctions
    \begin{equation}
vu  =  (v_0, v_1, v_2,...).(u_0, u_1, u_2,...)  =  (v_0u_0, u_1v_1, v_2u_2,...)\c  X \to_i Z,
    \label{3.3.3} \end{equation}
that ends with $ v_iu_i $ (resp. $ u_iv_i$) when $ i $ is even (resp. odd).  The identity of $X$ is 
a chain of identities.

A 2-cell of $ \rAdj_i(\bC)$
    \begin{equation}
\ph  =  (\ph_0,..., \ph_i)\c  (u_0,..., u_i; \eta_1, \ep_1,... \,) \to (v_0,..., v_i; \eta_1, \ep_1,... \,)\c  X \to_i Y,
    \label{3.3.4} \end{equation}
    \begin{equation*} \begin{array}{c} 
    \xymatrix  @C=20pt @R=20pt
{
~u_0~  \ar@{-|}[r]   \ar[d]_-{\ph_0}    &   ~u_1~  \ar@{-|}[r]    &
~u_2~  \ar@{-|}[r]   \ar[d]^-{\ph_2}    &...
\\ 
~v_0~  \ar@{-|}[r]    &   ~v_1~  \ar@{-|}[r]   \ar[u]_-{\ph_1}    &
~v_2~  \ar@{-|}[r]    &...
}
    \label{3.3.4bis} \end{array} \end{equation*}
amounts to a 2-cell $ \ph_0\c u_0 \to v_0 $ of $ \bC$, together with its mates in the 
corresponding adjunctions
    \begin{equation} \begin{array}{lr}
\ph_0\c u_0 \to v_0\c X \to Y,
\\[5pt]
\ph_1\c v_1 \to u_1\c Y \to X    \q  &   (\ph_1 = \eta_1v_1 \te u_1\ph_0v_1 \te u_1\ep_1),
\\[5pt]
\ph_2\c u_2 \to v_2\c X \to Y    &   (\ph_2 = \eta_2u_2 \te v_2\ph_1u_2 \te v_2\ep_2),
\\[5pt]
...
    \label{3.3.5} \end{array} \end{equation}

	The vertical composition of $ \ph $ with a 2-cell 
$ \psi  = (\psi_0,..., \psi_i)\c v \to w\c X \to_i Y $ amounts to the composite $ \ph_0 \te \psi_0$, 
and can be written as
    \begin{equation}
\ph \te \psi  =  (\ph_0 \te \psi_0, \psi_1 \te \ph_1, \ph_2 \te \psi_2,... )\c u \to w\c X \to_i Y,
    \label{3.3.6} \end{equation}
since mates agree with composition (in a contravariant way, of course). The identity of $u$ 
is the sequence $(\id(u_0),..., \id(u_i))$.

	The whisker composition of a 2-cell
 $ \ph = (\ph_0, \ph_1, \ph_2,...)\c u \to v\c X \to_i Y $ with arrows $ r\c X' \to_i X $ and 
 $ s\c Y \to_i Y' $ is
    \begin{equation}
s\ph r  =  (s_0\ph _0r_0,  r_1\ph _1s_1,  s_2\ph _2r_2,...)\c  sur \to svr\c  X' \to_i Y',
    \label{3.3.7} \end{equation}
since mates agree with whisker composition (again in a contravariant way).

	The 2-functor $ U_{ji}\c \bC_j \to \bC_i $ forgets the components of arrows and cells of 
all indices $ i+1,..., j$.

	Finally we define $\iAdj(\bC) $ as the multiple category $ \iGQ(\cC) $ of this sequence 
of 2-categories.

\subsection{Truncation}\label{3.4}
The multiple category $ \iAdj(\bC) $ extends the double category $ \Adj(\bC) $ of morphisms 
and adjunctions of $ \bC $ considered in \cite{GP1}, Section 3.5 (written down for 
$ \bC = \bCat$, but the general case works in the same way).

	To show that $\Adj(\bC) $ can be identified with the 2-dimensional truncation of 
$ \iAdj(\bC)$, we begin by noting that the 0- and 1-cells of $ \iAdj(\bC) $ coincide with the 
horizontal and vertical arrows of $ \Adj(\bC)$; we are left with examining the 2-dimensional 
cells.

	In a 01-cell of $ \iAdj(\bC) $ the vertical arrows are adjunctions 
$ u = (u_\bu, u^\bu , \eta, \ep)$  and  $v = (v_\bu, v^\bu , \eta', \ep') $ in $ \bC$ 

%
    \begin{equation} \begin{array}{c} 
    \xymatrix  @C=12pt @R=6pt
{
~A~   \ar[rr]^-{f}   \arDb_-{u}   &&   ~B~   \arDb^-{v}  &&&&
\bu   \ar[r]^{0}   \ar[d]^{1}  &
\\
&  \ph   &&&  \ph_\bu\c v_\bu f \to gu_\bu \c A \to D,  &&
\\ 
~C~  \ar[rr]_-{g}    &&   ~D~ 
}
    \label{3.4.1} \end{array} \end{equation}
and $ \ph $ amounts to a single 2-cell $ \ph_\bu  $ of $ \bC$, as above. But we can 
equivalently add a mate $ \ph^\bu \c fu^\bu  \to v^\bu g $
    \begin{equation} \begin{array}{c}
\ph^\bu =  \eta'fu^\bu  \te v^\bu \ph_\bu u^\bu  \te v^\bu g\ep\c \;  fu^\bu  \to v^\bu v_\bu fu^\bu  \to v^\bu gu_\bu u^\bu  \to v^\bu g,
    \label{3.4.2} \end{array} \end{equation}
and this completes the double cell $ \ph = (\ph_\bu , \ph^\bu ) $ as defined in \cite{GP1}.

\section{Generalised quintets for arrow bundles}\label{s4}
	We now study a similar construction of a non-cubical multiple category of generalised 
quintets, based on a sequence of 2-categories and 2-functors of a different shape.

	Then we deduce a non-cubical multiple category $ \iBnd(\bC) $ where an $i$-directed 
arrow is a `bundle' $ u = (u_0,..., u_i) $ of parallel arrows in a fixed 2-category $ \bC$.

\subsection{A second construction of generalised quintets}\label{4.1}
We start now from a sequence $ \cC $ of 2-functors which {\em raise the index}, and again 
are the identity on a common set $ S = \Ob \bC_i $ of objects
    \begin{equation} \begin{array}{c} 
    \xymatrix  @C=20pt @R=20pt
{
(\cC)   \qq  ~\bC_0~   \ar[r]^-{U_0}   &    ~\bC_1~   \ar[r]^-{U_1}     &   ~\bC_2~   \ar[r]     &   
~~...~~  \ar[r]  &  ~\bC_i~   \ar[r]^-{U_i}     &   ~~...~~
}
    \label{4.1.1} \end{array} \end{equation}
    \begin{equation*}
U_{ij}  =  U_{j-1} ... U_i\c \bC_i \to \bC_j    \qq    (0 \le  i < j),
    \label{4.1.1bis} \end{equation*}

\skp

	The 2-functors $ U_{ij} $ are called the {\em structural functors} of $ \cC$. We shall 
construct a multiple category $ \iM = \iGQ(\cC) $ of {\em generalised quintets} over the 
sequence $ \cC$, of coskeletal dimension 3. The construction is much the same as in 
\ref{3.1}, taking into account that the structural functors work the other way round.

\Ndt  (a) Again, the set of objects is $ S $ and an $i$-cell $ u\c X \to_i Y $ is a $\bC_i$-morphism 
$ (i \ge 0)$. They form the category $\cat_i(\iM) $ underlying $ \bC_i$.

\Ndt  (b) In dimension 2 an $ij$-cell (for $ 0 \le i < j$) is a $U$-{\em quintet}, or 
{\em quintet modulo} $ U_{ij}\c \bC_i \to \bC_j$. It amounts to: two $\bC_i$-morphisms 
$ r, s $ (its $j$-faces), two $\bC_j$-morphisms $ u, v $ (its $i$-faces) and a 2-cell 
$ \ph $ in $ \bC_j $
%
    \begin{equation} \begin{array}{c} 
    \xymatrix  @C=12pt @R=5pt
{
~A~   \ar[rr]^-{r}   \ar[dd]_-{u}   &&   ~B~   \ar[dd]^-{v}    &&&&
\bu   \ar[r]^i   \ar[d]^j  &
\\ 
& \ph  &&& \ph\c v.U_{ij}r \to U_{ij}s.u\c A \to D.  &&&
\\ 
~C~   \ar[rr]_-{s}    &&   ~D~
}
    \label{4.1.2} \end{array} \end{equation}

	Such an $ij$-cell has an obvious underlying cell $ |\ph| $ in the double category 
$ \Q(\bC_j) $ of quintets over $ \bC_j$. $U$-quintets inherit from the latter two composition 
laws in directions $ i $ and $ j$, and form a double category $\dbl_{ij}(\iM)$.

\Ndt  (c) In dimension 3 an $ijk$-cell (for $ 0 \le i < j < k$) is a cube of $U$-quintets whose image 
in $ \bC_k $ commutes
%
    \begin{equation}
    \begin{array}{c}  \xymatrix  @C=8pt   @R=15pt
{
~\bu~   \ar[rrr]^-r    \ar[dd]_x   \ar[rd]^>>>>u  &&&
~\bu~     \ar[rd]^-v   &&&&
~\bu~    \ar[rrr]^r   \ar[dd]_-x  &&&
~\bu~    \ar[dd]_-{x'}   \ar[rd]^-v
\\ 
& ~\bu~    \ar[rrr]^-s   \ar[dd]^y \arv[rru]|-\ph  &&&   
~\bu~  \ar[dd]^-{y'}  &&&&&&&   ~\bu~  \ar[dd]^-{y'}   &&&
~\bu~  \ar[rr]^>>>>i   \ar[rd]^>>>>j  \ar[d]_<<<<{k\,}  &&
\\ 
~\bu~  \ar[rd]_{u'}  \arv[ru]|-\om  &&&&&&&
~\bu~ \ar[rrr]^-{r'}    \ar[rd]_{u'}    \arv[rrruu]|-\pi  \arv[rrrrd]|-\psi  &&&
~\bu~  \ar[rd]_<{v'}  \arv[ru]|-\ze   &&&&&
\\ 
&   ~\bu~    \ar[rrr]_-{s'}  \arv[rrruu]|-\rho  &&&  ~\bu~   &&&
&   ~\bu~    \ar[rrr]_-{s'}    &&&  ~\bu~ 
}
    \label{4.1.3} \end{array} \end{equation}

  More precisely, we have six $U$-quintets determined by 2-dimensional cells in $ \bC_j $ 
or $ \bC_k$
    \begin{equation*} \begin{array}{lllr}
\text{- $ij$-cells:}    &   \ph\c v.U_{ij}r \to U_{ij}s.u,  &
\psi\c v'.U_{ij}r' \to U_{ij}s'.u'	    &    \text{(2-cells in }  \bC_j),
\\[5pt]
\text{- $ik$-cells:}   &    \pi \c x'.U_{ik}r \to U_{ik}r'.x,    &    
\rho \c r.U_{ik}s \to U_{ik}s'.y    &    \text{(2-cells in }  \bC_k),
\\[5pt]
\text{- $jk$-cells:}    &   \om\c y.U_{jk}u \to U_{jk}u'.x,    &  
\ze \c r.U_{jk}v \to U_{jk}v'.x'    &    \text{(2-cells in }  \bC_k).
    \label{4.1.4} \end{array} \end{equation*}

	They must satisfy the following commutativity relation, in the 2-category $ \bC_k $
$$	
r.U\ph \te \rho .Uu \te Us'.\om  =  \ze .Ur \te Uv'.\pi  \te U\psi.x\c\,  r.Uv.Ur  \to  Us'.Uu'.x,
$$	
where $ U $ stands for $ U_{ik} $ or $ U_{jk} $ when it operates on $ \bC_i $ or 
$ \bC_j$, respectively.

	The compositions in directions $ i, j, k $ amount to compositions of faces in the double 
categories $ \dbl_{ij}(\iM)$, $\dbl_{jk}(\iM) $ and $ \dbl_{ik}(\iM)$. We have thus a triple category $ \trp_{ijk}(\iM)$.

\Ndt  (d) Every cell of dimension $ n > 3 $ is an $n$-dimensional cube whose 2-dimensional 
faces are $U$-quintets, under the condition that each 3-dimensional face in direction 
$ ijk $ be an $ijk$-cell, as defined above.

\skp

	Again, the coskeletal dimension of the multiple category $ \iM = \iGQ(\cC) $ is at most 3. 
If all the functors $ U_i $ are embeddings of 2-categories (injectives on objects, morphisms 
and 2-cells), $ \iGQ(\cC) $ can be obtained as a multiple subcategory of $ \iQ(\bC_\infty )$, 
where $ \bC_\infty $ is the colimit of diagram \eqref{4.1.1} - or some larger structure if 
convenient. $ \iGQ(\cC) $ is thus a multiple category of quintet type over the 2-category 
$ \bC_\infty$.

\subsection{A multiple category of arrow bundles}\label{4.2}
As an example, we start from a 2-category $ \bC $ 
and construct a diagram $ \cC $ of 2-categories and 2-functors, of type \eqref{4.1.1}.

	For every $ i \ge 0$, $ \bC_i $ is a 2-category with the same objects of $\bC$, and 
{\em bundles} of arrows and cells
    \begin{equation} \begin{array}{ll}
u  =  (u_0,..., u_i)\c A \to_i B,   &   u_0,..., u_i  \in  \bC(A, B),
\\[5pt]
\ph  =  (\ph_0,..., \ph_i)\c u \to v\c A \to_i B,   \q   &
\ph_0\c u_0 \to v_0,..., \,  \ph_i\c u_i \to v_i   \; \inn  \;  \bC,
    \label{4.2.1} \end{array} \end{equation}
so that $ \bC_0 = \bC$. Furthermore the 2-functor $ U_i\c \bC_i \to \bC_{i+1} $ repeats the 
last arrow or cell of a bundle.

	We shall write $ U_{ij}(u_0,..., u_i) = (u_0,..., u_j)$, leaving as understood that 
$ u_i = ... = u_j$; similarly for 2-cells.

	Plainly we are just considering an increasing filtration of the 2-category $ \bC_\infty $ 
that has unbounded bundles $ (u_h)_{h\ge 0}$, $(\ph_h)_{h\ge 0}$ of parallel arrows and 
cells of $ \bC $ (eventually constant, if we want).
	
	Therefore the multiple category $ \iBnd(\bC) = \iGQ(\cC) $ {\em of arrow bundles over} 
$ \bC $ is a multiple subcategory of $ \iQ(\bC_\infty )$. It can be described as follows.
	
\Ndt  (a) The set of objects is $ \Ob \bC $ and an $i$-cell $ u = (u_0,..., u_i)\c A \to_i B $ is a 
bundle of $ i+1 $ parallel arrows of $ \bC$. They form the category $\cat_i(\iBnd(\bC)) = \bC_i$.

\Ndt  (b) In dimension 2 an $ij$-cell (for $ 0 \le i < j$) amounts to: two $\bC_i$-morphisms 
$ r, s $ (its  $j$-faces), two $\bC_j$-morphisms $ u, v $ (its $i$-faces) and a sequence 
$ \ph = (\ph_0,..., \ph_j) $ of 2-cells in $ \bC $
%
    \begin{equation} \begin{array}{c} 
    \xymatrix  @C=12pt @R=5pt
{
~A~   \ar[rr]^-{r}   \ar[dd]_-{u}    &   \ar@/^/[ld]^\ph   &   ~B~   \ar[dd]^-{v}    &&&&
\bu   \ar[r]^i   \ar[d]^j  &
\\ 
&&&& \ph_h\c v_hr_h \to s_hu_h  ~~~ (1 \le h \le j), &&&
\\ 
~C~   \ar[rr]_-{s}    &&   ~D~
}
    \label{4.2.2} \end{array} \end{equation}
where the sequences $ (r_h) $ and $ (s_h) $ are constant for $ h \ge i$.

\Ndt  (c) In dimension 3 an $ijk$-cell (for $ 0 \le i < j < k$) is a cubical diagram
%
    \begin{equation}
    \begin{array}{c}  \xymatrix  @C=8pt   @R=15pt
{
~\bu~   \ar[rrr]^-r    \ar[dd]_x   \ar[rd]^>>>>u  &&&
~\bu~     \ar[rd]^-v   &&&&
~\bu~    \ar[rrr]^r   \ar[dd]_-x  &&&
~\bu~    \ar[dd]_-{x'}   \ar[rd]^-v
\\ 
& ~\bu~    \ar[rrr]^-s   \ar[dd]^y \arv[rru]|-\ph  &&&   
~\bu~  \ar[dd]^-{y'}  &&&&&&&   ~\bu~  \ar[dd]^-{y'}   &&&
~\bu~  \ar[rr]^>>>>i   \ar[rd]^>>>>j  \ar[d]_<<<<{k\,}  &&
\\ 
~\bu~  \ar[rd]_{u'}  \arv[ru]|-\om  &&&&&&&
~\bu~ \ar[rrr]^-{r'}    \ar[rd]_{u'}    \arv[rrruu]|-\pi  \arv[rrrrd]|-\psi  &&&
~\bu~  \ar[rd]_<{v'}  \arv[ru]|-\ze   &&&&&
\\ 
&   ~\bu~    \ar[rrr]_-{s'}  \arv[rrruu]|-\rho  &&&  ~\bu~   &&&
&   ~\bu~    \ar[rrr]_-{s'}    &&&  ~\bu~ 
}
    \label{4.2.3} \end{array} \end{equation}

	Its six faces are determined by their boundary-arrows and by bundles of 2-cells of $ \bC$

    \begin{equation*} \begin{array}{lllr}
\text{- $ij$-cells:}  \q  &   \ph_h\c v_hr_h \to s_hu_h,  &
\psi_h\c v'_hr'_h \to s'_hu'_h	\q    &   (1 \le h \le j),
\\[5pt]
\text{- $ik$-cells:}   &   \pi _h\c x'_hr_h \to r'_hx_h,    &    
\rho _h\c y'_hs_h \to s'_h.y_h    &    (1 \le h \le k),
\\[5pt]
\text{- $jk$-cells:}    &  \om_h\c y_hu_h \to u'_hx_h,    &  
\ze _h\c y'_hv_h \to v'_hx'_h    &   (1 \le h \le k).
    \label{4.2.4} \end{array} \end{equation*}

	They must satisfy the following commutativity relations, in $ \bC$ 
$$
y'_h\ph_h \te \rho_hu_h \te s'_h\om_h  =  \ze _hr_h \te v'_h\pi _h \te \psi_hx_h\c  y'_hv_hr_h \to s'_hu'_hx_h	~~~  (1 \le h \le k).
$$

\ndt  (d) Every cell of dimension $ n > 3 $ is an $n$-cube whose 2-dimensional faces are 
$U$-quintets, under the condition that each 3-dimensional face in direction $ ijk $ be an 
$ijk$-cell, as defined above.

	
\section{The triple category of pseudo algebras}\label{s5}
	For a 2-monad $ T $ on a 2-category $ \bC $ we build a triple category of pseudo 
$T$-algebras, of quintet type. Then we show how algebras and normal pseudo algebras of 
graphs of categories are related to strict and weak double categories.

	Algebras and pseudo algebras for a 2-monad are studied in \cite{Bu, BlKP, Fi}.

\subsection{Reviewing pseudo algebras}\label{5.1}
We have a 2-monad $ T = (T, h, m) $ on the 2-category $ \bC$. A {\em pseudo algebra} in 
$ \bC $ is a quadruple $ (A, a, \om, \ka) $ consisting of an object $ A $ of $ \bC$, a map 
$ a\c TA \to A $ (the {\em structure}) and two vertically invertible cells (the {\em comparisons})
    \begin{equation} \begin{array}{lr}
\om\c 1_A \to a.hA   &    \text{(the {\em normaliser}),}
\\[5pt]
\ka\c a.Ta \to a.mA   \qq  &    \text{(the {\em extended associator}),}
    \label{5.1.1} \end{array} \end{equation}
    \begin{equation*} \begin{array}{c} 
    \xymatrix  @C=10pt @R=10pt
{
~A~   \ar[rr]^-{1}   \ar[dd]_-{hA}    &  \ar@/^/[ld]^\om  &   ~A~   \ar[dd]^-{1}   &&&&
~T^2A~   \ar[rr]^-{Ta}   \ar[dd]_-{mA}    &  \ar@/^/[ld]^\ka   &   ~TA~   \ar[dd]^-{a}
\\
&&&&&&&
\\ 
~TA~   \ar[rr]_-{a}  &&   ~A~   &&&&    ~TA~   \ar[rr]_-{a}    &&   ~A~
}
    \label{5.1.1bis} \end{array} \end{equation*}

	These data are to satisfy three conditions of coherence:
    \begin{equation} \begin{array}{c}
a.T\om \te \ka.ThA  =  1_a  =  \om a \te \ka.hTA,
\\[5pt]
\ka.T^2a \te \ka.mTA  =  a.T\ka \te \ka.TmA,
    \label{5.1.2} \end{array} \end{equation}
    \begin{equation*} \begin{array}{c} 
    \xymatrix  @C=5pt @R=20pt
{
~a~   \ar[r]^-{\om a}   \ar[d]_-{a.T\om}   \ar[rrdd]^-{1}    &
a.hA.a   \ar@{=}[r]  &   a.Ta.hTA   \ar[d]^-{\ka.hTA}  &
~a.Ta.T^2a~   \ar[rrr]^-{a.T\ka}   \ar[d]_-{\ka.T^2a}  &&&    a.Ta.TmA    \ar[d]^-{\ka.TmA}
\\ 
~a.Ta.ThA~    \ar[d]_-{\ka.ThA}    &&   ~a.mA.hTA~   \ar@{=}[d]  &
~a.mA.T^2a~    \ar@{=}[d]    &&&   ~a.mA.TmA~    \ar@{=}[d]
\\ 
~a.mA.ThA~    \ar@{=}[rr]    &&   ~a~   &
~a.Ta.mTA~     \ar[rrr]_-{\ka.mTA}    &&&   ~a.mA.mTA~
}
    \label{5.1.2bis} \end{array} \end{equation*}

	A (strict) {\em morphism of pseudo algebras} $ f\c (A, a, \om, \ka) \to (B, b, \om, \ka) $ 
is a morphism $ f\c A \to B $ of $ \bC $ which preserves the structure (note that the comparisons 
of pseudo algebras are always denoted by the same letters)
    \begin{equation}
b.Tf  =  f.a,	   \q   \om f  =  f\om ,   \q   \ka.T^2f  =  f\ka.
    \label{5.1.3} \end{equation}

	We write as $ \bPsa_2(T) $ the category of pseudo algebras and their strict morphisms.

\subsection{Normal pseudo algebras}\label{5.2}
We say that a pseudo algebra $ (A, a, \om , \ka) $ is {\em normal} if the normaliser $ \om $ 
is the identity, and therefore $ a.hA = 1_A$.

	Normal pseudo algebras are important, and should not be called `unitary', as the 
following example shows. We consider the 2-monad 
$ T\c \bCat \to \bCat$, where $ T(A) $ is the free strict monoidal category over the category $A$; 
its objects are the finite families $ (X_1,..., X_n)$ of objects of $ A$.

	A strict monoidal structure $ a\c TA \to A $ over a (small) category $ A $
    \begin{equation}
a(X_1,..., X_n)  =  \TE_i \, X_i,
    \label{5.2.1} \end{equation}
gives all finite tensor products in $ A$; the identity object $ I = a(\ul{e}) $ comes from the 
empty family $\ul{e}$.

	A {\em normal} pseudo algebra $ (A, a, \ka) $ amounts to an unbiased monoidal 
category, {\em with a trivial unary tensor} $ \TE(X) = X $ (of single objects of $ A$); 
the two unitors and the binary associator all come from the extended associator $ \ka $ 
(that operates on finite tensor products)
    \begin{equation}  \begin{array}{c}
    \xymatrix  @C=42pt @R=20pt  @H=30pt   @!0
{
\TE(\TE(\ul{e}), \TE(X))~  \ar[rr]  &&   ~\TE(X),   &~&
\TE(\TE(X), \TE(\ul{e}))~  \ar[rr]  && ~ \TE(X),
\\
 \TE(\TE(X), \TE(Y, Z))~     \ar[rrr]  &&&  ~ \TE(X, Y, Z)~   &&&
~\TE(\TE(X, Y), \TE(Z))   \ar[lll].
}
    \label{5.2.2}  \end{array}  \end{equation}

	In the general case each object $ X $ has an associated object $ \TE(X)$, isomorphic to 
$ X$, which - when the procedure is idempotent - can be viewed as a `normal form'.

	For instance, let $ A $ be the monoidal category of finite totally ordered sets, with 
$ X\te Y $ the ordinal sum (extending the sum $ X+Y $ by letting $ x < y $ for all $ x \in X$, 
$y \in Y$). Redefining $ \TE'(X_1,... , X_n) $ as the ordinal isomorphic to $ \TE(X_1,... , X_n) $ 
we have a pseudo algebra $ (A, a, \om, \ka) $ which is not normal, but has a trivial associator 
$ \ka = \id $.

\subsection{Lax, colax and pseudo morphisms}\label{5.3}
Let us come back to a 2-monad $(T, h, m) $ on the 2-category $ \bC $ and its pseudo 
algebras.

\Ndt  (a) A {\em lax morphism of pseudo algebras} $\f = (f, \ph)\c (A, a, \om, \ka) \to (B, b, \om, \ka)$ 
is a morphism $ f\c A \to B $ of $ \bC $ with a comparison cell $ \ph $ such that:
    \begin{equation} \begin{array}{c}
\ph\c b.Tf \to f.a,   \qq   \om f \te \ph.hA  =  f\om,
\\[5pt]
\ka.T^2f \te \ph.mA  =  b.T\ph \te \ph.Ta \te f\ka,
    \label{5.3.1} \end{array} \end{equation}
    \begin{equation*} \begin{array}{c} 
    \xymatrix  @C=22pt @R=20pt
{
~f~   \ar[r]^-{f\om}   \ar[d]_-{\om f}    &   ~fa.hA~  &
~b.Tb.T^2f~   \ar[r]^-{b.T\ph}   \ar[d]_-{\ka.T^2f}    &
~b.Tf.Ta~   \ar[r]^-{\ph.Ta}    &   ~fa.TA~  \ar[d]^-{f\ka}
\\ 
~b.hB.f~   \ar@{=}[r]    &   ~b.Tf.hA~    \ar[u]_-{\ph.hA}  &
~b.mB.T^2f~   \ar@{=}[r]    &   ~b.Tf.mA~   \ar[r]_-{\ph.mA}    &   ~fa.mA~
}
    \label{5.3.1bis} \end{array} \end{equation*}

	These morphisms compose: given $ (g, \ga)\c (B, b, \om , \ka) \to (C, c, \om, \ka)$, we let
    \begin{equation}
(g, \ga).(f, \ph)  =  (gf,  \ga.Tf \te g\ph),   \q   \ga.Tf \te g\ph\c  c.T(gf) \to gb.Tf \to gfa.
    \label{5.3.2} \end{equation}

	We have thus a category $ \Lx\bPsa_2(T)$, with identities 
$ \id (A, a, \om, \ka) = (\id A, 1_a)$. A {\em pseudo morphism} is a lax morphism $ (f, \ph) $ 
where the cell $ \ph $ is vertically invertible.

\Ndt (b) A {\em colax morphism of pseudo algebras} 
$\r = (r, \rho)\c (A, a, \om, \ka) \to (B, b, \om, \ka) $ 
is a morphism $ r\c A \to B $ of $ \bC $ with a comparison cell $ \rho $ such that:
    \begin{equation} \begin{array}{c}
\rho\c r.a \to b.Tr,   \qq   r\om \te \rho.hA  =  \om r,
\\[5pt]
r\ka \te \rho.mA  =  \rho.Ta \te b.T\rho \te \ka.T^2r,
    \label{5.3.3} \end{array} \end{equation}
    \begin{equation*} \begin{array}{c} 
    \xymatrix  @C=22pt @R=20pt
{
~r~   \ar[r]^-{r\om}   \ar[d]_-{\om r}    &   ~ra.hA~     \ar[d]^-{\rho.hA}  &
~ra.Ta~   \ar[r]^-{\rho.Ta}   \ar[d]_-{r\ka}    &
~b.Tr.Ta~   \ar[r]^-{b.T\rho}    &   ~b.Tb.T^2r~  \ar[d]^-{\ka.T^2r}
\\ 
~b.hB.r~   \ar@{=}[r]    &   ~b.Tr.hA~   &
~ra.mA~    \ar[r]_-{\rho.mA}    &   ~b.Tr.mA~  \ar@{=}[r]    &   ~b.mB.T^2r~
}
    \label{5.3.3bis} \end{array} \end{equation*}

	Given a second colax morphism $ (s, \si)\c (B, b, \om , \ka) \to (C, c, \om , \ka)$, we let
    \begin{equation}
(s, \si).(r, \rho)  =  (sr, \si.Tr \te s\rho),   \q   \si.Tr \te s\rho\c sra \to sb.Tr \to c.T(sr).
    \label{5.3.4} \end{equation}

	This gives a category $ \Cx\bPsa_2(T)$, with identities as above.

\subsection{A double category of pseudo algebras}\label{5.4}
We form now a double category $ \Psa_2(T) $ {\em of pseudo algebras} of $ T$, with lax 
morphisms in horizontal and colax morphisms in vertical. The construction is similar to that 
of $ \Dbl $ in \cite{GP2}, and we shall see that it extends it.

	Its objects are the pseudo $T$-algebras $ \A = (A, a, \om , \ka)$, $\B = (B, a, \om , \ka)$,...; 
its horizontal arrows are the {\em lax} morphisms $ \f = (f, \ph)$, $\g = (g, \ga)$...; its vertical 
arrows are the {\em colax} morphisms $ \r = (r, \rho)$, $\s = (s, \si)$... A cell 
$ \bpi \c (\r \; ^\f _\g \;  \s) $ consists of four morphisms as above together with a cell 
$ \pi \c sf \to gr\c A \to D $ in $ \bC $ (where $ sf $ and $ gr $ are {\em just} morphisms of 
$ \bC$, not of algebras)
%
    \begin{equation} \begin{array}{c} 
    \xymatrix  @C=16pt @R=8pt
{
~\A~   \ar[rr]^-{\f}   \ar[dd]_-{\r}   &&   ~\B~   \ar[dd]^-{\s}  &&&
~A~   \ar[rr]^-{f}   \ar[dd]_-{r}   &   \ar@/^/[ld]^\pi   &   ~B~   \ar[dd]^-{s}
\\ 
&  \bpi  &&&&
\\ 
~\C~   \ar[rr]_-{\g}    &&   ~\D~   &&&
~C~   \ar[rr]_-{g}    &&   ~D~
}
    \label{5.4.1} \end{array} \end{equation}
    \begin{equation*} \begin{array}{cr}
f\c A \to B,   ~~~   \ph\c b.Tf \to f.a,    \q    g\c C \to D,   ~~~   \ga\c d.Tg \to g.c  &
\text{(lax morphisms),}
\\[5pt]
r\c A \to C,   ~~~    \rho\c r.a \to c.Tr,   \q   ~  s\c B \to D,   ~~~~   \si\c s.b \to d.Ts   &
~~~   \text{(colax morphisms),}
\\[5pt]
\pi \c sf \to gr\c A \to D   &    \text{(a 2-cell of  \bC).}
    \label{} \end{array} \end{equation*}

	(Let us note that $ \bpi$ consists of its boundary and a double cell 
$ \pi \c  (r \; ^f _g \;  s) $ of the double category $ \Q(\bC) $ of quintets over the 2-category 
$ \bC$, as displayed in the right diagram above.)

	These data must satisfy the following coherence condition, in the 2-category $ \bC$

\Ndt  (coh)	   $~~    s\ph \te \pi a \te g\rho  =  \si.Tf \te d.T\pi  \te \ga.Tr,$
    \begin{equation} \begin{array}{c} 
    \xymatrix  @C=8pt @R=20pt
{
&   ~TB~   \ar[rr]^-{b}   \arv[rd]|-{\ph}    &&   ~B~   \ar[rd]^-{s}   &&&
&   ~TB~   \ar[rr]^-{b}     \ar[rd]^-{Ts}  &&   ~B~   \ar[rd]^-{s} 
\\ 
~TA~   \ar[ru]^-{Tf}   \ar[rr]|{~a~}   \ar[rd]_-{Tr}  &&  ~A~  \ar[ru]^-{f}    \ar[rd]_-{r}   & \pi  &  ~D~  & = &
~TA~   \ar[ru]^-{Tf}   \ar[rd]_-{Tr}  & T\pi &
~TD~  \ar[rr]|{~d~}     \arv[ru]|-{\si} \arv[rd]|-{\ga}   &&  ~D~
\\ 
&   ~TC~   \ar[rr]_-{c}   \arv[ru]|-{\rho}    &&   ~C~   \ar[ru]_-{g}  &&&
&   ~TC~   \ar[rr]_-{c}   \ar[ru]_-{Tg}    &&   ~C~   \ar[ru]_-{g} 
}
    \label{5.4.2} \end{array} \end{equation}

	The horizontal and vertical composition of double cells are both defined using the 
vertical composition of the 2-category $ \bC$. Namely, for a consistent matrix of double cells
%
    \begin{equation}
    \begin{array}{c}  \xymatrix  @C=12pt @R=8pt  
{
~\iA~  \ar[rr]^-{\f}   \ar[dd]_-{\r}  &&  
~\iB~   \ar[rr]^{\f'}  \ar[dd]^-{\s}  &&  ~\iB'~  \ar[dd]^-{\t}
\\
&  \bpi  &&  \bth
\\
~\iC~  \ar[rr]|{~\g~}  \ar[dd]_-{\r'}  &&  
~\iD~   \ar[rr]|{~\g'~} \ar[dd]^-{\s'}  &&  ~\iD'~ \ar[dd]^-{\t'}  &&&
\\
&  \bze  &&  \bom  &&&&&
\\
~\ic{E}~ \ar[rr]_-{\h}     &&  ~\ic{F}~ \ar[rr]_-{\h'}     &&  ~\ic{F'}~   
}
    \label{5.4.3} \end{array} \end{equation}
we let: 
    \begin{equation}
(\pi\sep \th )  =  \th f \te g'\pi,    \qq    ( \frac{\pi}{\ze} )  =  s'\pi \te \ze r.
    \label{5.4.4} \end{equation}

	We prove below that these cells are indeed coherent. Then, plainly, these composition 
laws are strictly associative and unitary. Moreover, they satisfy the middle-four interchange 
law because this holds in the double category $ \Q(\bC) $ of quintets over the 2-category 
$ \bC$.

	We have thus a forgetful double functor, which is cellwise faithful
    \begin{equation}
U\c \Psa_2(T) \to \Q(\bC),   \q   \A \mapsto A,   \q   \f  \mapsto f,   \q
\r \mapsto r,   \q   \bpi \mapsto \pi.
    \label{5.4.5} \end{equation}

	Finally we verify the axiom (coh) for $(\pi\sep \th )$, which means that
$$	
t(\ph'.Tf \te f'\ph) \te (\th f \te g'\pi)a \te g'g\rho  = \tau.T(f'f) \te d'.T(\th f \te g'\pi) \te (\ga'.Tg \te g'\ga).Tr,
$$
where $a\c TA \to A$ and $d'\c TD' \to D'$ are the structures of the pseudo algeras $\iA$ 
and $\iD'$.

The proof is similar to that of Theorem 
\ref{2.8}. Writing cells as arrows between morphisms, our property amounts to the commutativity 
of the outer diagram below

%
    \begin{equation} \begin{array}{c} 
    \xymatrix  @C=50pt @R=20pt
{
tf'fa~  \ar[r]^-{\th fa}  & ~g'sfa~  \ar[r]^-{g'\pi a}  & ~g'gra  \ar[d]^-{g'g\rho} 
\\ 
tf'b.Tf~   \ar[r]^-{\th b.Tf}  \ar[u]^-{tf'\ph}  &
~g'sb.Tf   \ar[u]_-{g's\ph}     \ar[d]^-{g'\si.Tf}  &  g'gc.Tr
\\   
tb'.T(f'f)   \ar[u]^-{t\ph' Tf}  \ar[d]_-{\tau .T(f'f)}   &  
g'd.T(sf)~  \ar[r]^-{g'd.T\pi}  &  ~g'd.T(gr)  \ar[u]_-{g'\ga.Tr}
\\ 
d'.T(tf'f)~   \ar[r]_-{d'.T(\th f)}   &
~d'.T(g'sf)~  \ar[r]_-{d'.T(g' \pi)}  \ar[u]_-{\ga'T(sf)}  &
~d'.T(g'gr)  \ar[u]_-{\ga'.T(gr)}
}
    \label{5.4.6} \end{array} \end{equation}

\skp

	Here the two hexagons commute by coherence of the double cells $ \bpi $ and $ \bth$, 
and the two rectangles by interchange of 2-cells in $ \bC$.

\subsection{Triple categories of pseudo algebras}\label{5.5}
The double category $ \Psa_2(T) $ can be extended to a triple category $ \iP = \iPs\Psa_2(T) $ 
of pseudo algebras, adding the pseudo morphisms as transversal arrows.

	More precisely, the sets $ P_0, P_1, P_2 $ of arrows of $ \iP $ consist of the pseudo, lax 
and colax morphisms of pseudo algebras, respectively. In dimension 2, the new 01- and 
02-cells are obvious, since $ P_0 $ can be viewed as a subset of $ P_1 $ and $ P_2$. Finally 
a 3-dimensional 012-cell is a `commutative cube' determined by its six faces, as in \eqref{1.2.2}.

	$\iPs\Psa_2(T) $ is a triple category of quintet type, with respect to the obvious 
forgetful functor with values in the triple category $ \trc_3(\iQ(\bC))$. The latter has the 
same objects as $ \bC$, the same arrows in all three directions, quintets for all three kinds 
of double cells and commutative cubes for three dimensional cells.

	Our construction is foreshadowed in Shulman's \cite{Sh}, Remark (4.8). As he points out 
(private communication): a 2-monad on $ \bC $ extends canonically to a triple monad on the 
triple category $ \trc_3(\iQ(\bC))$, and its Eilenberg-Moore object (in the 2-category of strict 
triple categories, strict functors and strict transformations) consists of strict algebras, with strict, lax, and colax morphisms respectively, and higher cells as above.

	In fact, we are more interested in the triple subcategory $ \iPs(\rN\Psa_2(T))$
{\em of normal pseudo algebras}, which - for a convenient 2-monad $ T$, will be proved 
to be equivalent to the triple category $ \iPs(\Dbl)$, obtained by extending the double 
category $ \Dbl $ as above.

	(Taking strict morphisms and double functors in the transversal direction, one would not 
get an equivalence, as will be remarked at the end of the proof of Theorem \ref{5.8}).

\subsection{Graphs of categories. }\label{5.6}
To examine double categories in the present framework, we let $ \bC = \Gph\bCat $ be the 
2-category of graphs $ A = (A_i, \dd^\al) $ in $ \bCat$
    \begin{equation}
\dd^\al\c A_1  \rr  A_0,
    \label{5.6.1} \end{equation}
where a 2-cell $ \ph\c F \to G\c A \to B $ is a pair of natural transformations of ordinary 
functors, consistent with the faces
    \begin{equation}
\ph_i\c F_i \to G_i\c A_i \to B_i,   \q   \dd^\al\ph_1 =  \ph_0\dd^\al   \qq   (i = 0, 1;  \, \al = \pm).
    \label{5.6.2} \end{equation}

	A double category $ \D $ has an underlying graph $ U(\D) = (\Hor_i(\D), \dd^\al)$, formed 
by the category $\Hor_0(\D) $ of objects and horizontal arrows, the category $ \Hor_1(\D) $ of 
vertical arrows and double cells (both with horizontal composition), linked by two ordinary 
functors, the vertical faces $ \dd^\al$.

	This defines a forgetful 2-functor
    \begin{equation}
U\c \bDbl \to \Gph\bCat = \bC,
    \label{5.6.3} \end{equation}
on the 2-category of (small) double categories, double functors and horizontal transformations.

\begin{thm} [Strict double categories as algebras]\label{5.7}
The 2-functor $ U $ defined above is 2-monadic: it gives a comparison 2-isomorphism 
$ K\c \bDbl \to \bAlg(T)$ with the 2-category of T-algebras for the associated 2-monad 
$T = UD$.
\end{thm}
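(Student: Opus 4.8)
The plan is to verify that the 2-functor $U\c \bDbl \to \Gph\bCat$ satisfies the hypotheses of the 2-categorical monadicity theorem (the 2-dimensional Beck theorem), and then to exhibit the comparison functor $K$ directly as an isomorphism. First I would construct the left 2-adjoint $D \adj U$: given a graph $A = (A_1 \rr A_0)$ of categories, $D(A)$ should be the free double category generated by it, whose horizontal category is $A_0$, whose vertical arrows are the edges of $A_1$, and whose double cells are freely generated paths of composable vertical arrows decorated by morphisms of $A_1$. The unit $h$ and multiplication $m$ of the induced 2-monad $T = UD$ are then forced. Establishing that this $D$ is genuinely left 2-adjoint to $U$ — i.e.\ that the adjunction holds at the level of hom-categories, respecting 2-cells (horizontal transformations on the $\bDbl$ side, and the natural transformations of \eqref{5.6.2} on the $\Gph\bCat$ side) — is the first substantive step.

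Next I would produce the comparison 2-functor $K\c \bDbl \to \bAlg(T)$. A double category $\D$ has underlying graph $U(\D)$, and the composition structure of $\D$ equips $U(\D)$ with a $T$-algebra structure $a\c TU(\D) \to U(\D)$, obtained by interpreting each formal path of vertical arrows and double cells as its actual composite in $\D$; the algebra axioms (associativity and unitality with respect to $m$ and $h$) are exactly the associativity and unit laws of vertical composition in $\D$. This assignment extends to double functors and to horizontal transformations, giving a 2-functor into $\bAlg(T)$.

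The heart of the argument is to show $K$ is a 2-isomorphism, for which I would construct an explicit inverse rather than merely invoke abstract monadicity. Given a $T$-algebra $(A, a)$ with $A = (A_1 \rr A_0)$, the structure map $a$ specifies how to compose formal vertical paths; I would read off from $a$ a vertical composition on the edges of $A_1$ and a compatible composition on double cells, and check that the algebra axioms translate \emph{precisely} into the axioms of a double category (strict associativity and unitality of vertical composition, together with the interchange with the horizontal composition already present in $A_0$ and $A_1$ as categories). This back-and-forth should be a bijection on objects, on arrows in each direction, and on 2-cells, hence a 2-isomorphism. The main obstacle I anticipate is bookkeeping: one must confirm that \emph{no data are lost or added} in either direction — in particular that the freeness of $D$ makes the $T$-action carry exactly the content of vertical composition and nothing more, and that the coherence of horizontal transformations matches the morphisms of $T$-algebras on the nose. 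Because the correspondence is strict on both sides (double categories have strictly associative, unitary compositions, and $T$-algebras satisfy strict algebra laws), I expect $K$ to be an honest isomorphism of 2-categories rather than merely an equivalence, so the verification reduces to checking that the two translations are mutually inverse, which is routine once the free construction $D$ is pinned down.
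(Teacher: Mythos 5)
Your proposal follows essentially the same route as the paper: construct the free double category $D(A)$ (horizontal category $A_0$, with vertical arrows and double cells given by the free categories on the graphs of objects and morphisms of $A$), obtain the 2-adjunction $D \dashv U$ and the 2-monad $T=UD$, and conclude that the comparison $K$ is an isomorphism because a $T$-algebra structure on a graph of categories is exactly a strictly associative, unitary vertical composition compatible with the existing horizontal one. One minor slip: the vertical arrows of $D(A)$ must be \emph{paths} $(u_1,\dots,u_n)$ of composable objects of $A_1$ (the free category on the graph $\Ob A$), not single edges; otherwise your sketch matches the paper, which likewise leaves the final isomorphism check as routine.
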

\begin{proof}
A graph of categories $ A = (A_i, \dd^\al) $ generates a free double category $ D(A)$, described as follows.

\Ndt  (a) $ \Hor_0(DA) $ is the category $ A_0$.

\Ndt  (b) $\Ver_0(DA) $ is the free category generated by the graph of sets 
$ \Ob A = (\Ob A_i, \dd^\al)$; its arrows give the vertical arrows $ (u_1,..., u_n) $ of $DA$, 
including the vertical unit $ e(x) $ on an object $ x $ of $ A_0 $ (the {\em empty path} at $ x$).

\Ndt  (c) $ \Ver_1(DA) $ is the free category generated by the graph of sets 
$ \Mor A = (\Mor A_i, \dd^\al)$; its arrows give the double cells $ (a_1,..., a_n) $ of $ DA$, 
including the vertical unit $ e(f) $ on a morphism $ f $ of $ A_0 $

    \begin{equation} \begin{array}{c} 
    \xymatrix  @C=10pt @R=6pt
{
~\bu~   \ar[rr]^-{f_0}   \arDb_-{u_1}    &&   ~\bu~   \arDb^-{v_1}
\\
&   a_1
\\ 
~\bu~   \ar[rr]   \ar@{.}[dd]    &&   ~\bu~    \ar@{.}[dd]         &&&
~x~   \ar[rr]^-{f}   \arDb_-{e(x)}    &&   ~y~   \arDb^-{e(y)}
\\
&&&&&&   e(f)
\\
~\bu~   \ar[rr]   \arDb_-{u_n}    &&   ~\bu~   \arDb^-{v_n}   &&&
~\bu~   \ar[rr]_-{f}    &&   ~\bu~
\\
&   a_n
\\ 
~\bu~   \ar[rr]_-{f_n}    &&   ~\bu~
}
    \label{5.7.1} \end{array} \end{equation}

\ndt  (d) The horizontal composition of these double cells is a concatenation of compositions 
in $ A_1$, which we write as $(a_i\sep b_i)$
$$
((a_1,..., a_n)\sep (b_1,..., b_n))  =  ((a_1\sep b_1),..., (a_n\sep b_n)).
$$
	The obvious embedding $ hA\c A \to UD(A) $ is the 2-universal arrow from $ A $ to 
$ U$. This gives the left 2-adjoint $ D\c \bC \to \bDbl $ and the associated 2-monad 
$ (T, h, m) $ on $ \bC$, with $ T = UD$.

	The comparison $ K $ is plainly an isomorphism of 2-categories.
\end{proof}	

\begin{thm} [Weak double categories as normal pseudo algebras]\label{5.8}
The triple categories $ \iPs(\rN\Psa_2(T)) $ and $ \iPs\Dbl $ are linked by an adjoint 
equivalence of triple categories (\cite{GP10}, Section 5.4)
    \begin{equation} \begin{array}{c}
V\c  \iPs(\rN\Psa_2(T))   \rl    \iPs\Dbl  \cc J,
\\[5pt]
VJ  =  1,  \q   \ep\c JV \iso 1,   \qq   V\ep  =  1,   \q   \ep J  =  1.
    \label{5.8.1} \end{array} \end{equation}
\end{thm}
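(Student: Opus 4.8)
The plan is to reduce the statement to the two-dimensional equivalence of \cite{GP10}, Section 5.4, and then propagate it through the transversal direction and the three-dimensional cells. On objects I would take $ J $ to send a weak double category $ \D $ to the normal pseudo $T$-algebra whose underlying graph of categories is $ U(\D) $ (the forgetful $2$-functor \eqref{5.6.3}, for the $2$-monad $ T = UD $ of Theorem \ref{5.7}), whose structure map $ a\c TA \to A $ composes a path $ (u_1,..., u_n) $ of vertical arrows by the left-bracketed iterate of the binary vertical composition of $ \D$, with trivial unary composite (so that $ \om = \id $, i.e.\ the algebra is normal), and whose extended associator $ \ka $ is assembled from the special associativity cells of $ \D$. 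Conversely $ V $ would send a normal pseudo algebra $ (A, a, \ka) $ to the weak double category with horizontal structure $ A_0$, with vertical arrows and cells taken from $ A_1$, with binary vertical composition $ u \te v = a(u, v) $ and with associator and unitors read off from $ \ka $ and from normality, exactly as in the unbiased-monoidal description of \ref{5.2}.

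The three directions of $ \iPs $ are handled uniformly. A lax morphism of pseudo algebras $ (f, \ph) $, with $ \ph\c b.Tf \to f.a $ as in \eqref{5.3.1}, corresponds to a lax double functor: evaluated on paths, $ \ph $ yields precisely the lax comparison cells on vertical composites, and the coherence \eqref{5.3.1} matches the lax functor axioms. The colax case is transversally dual, and a pseudo morphism (a lax one with invertible $ \ph$) corresponds to a pseudo double functor, so $ V $ and $ J $ extend to the transversal direction. The $12$-cells on both sides are the quintet cells $ \bpi $ of \eqref{5.4.1}, which correspond as soon as the $1$-cells do; and since both triple categories are of quintet type and coskeletal of dimension $ \le 3$, the $012$-cubes are determined by their six two-dimensional faces, so they match automatically once the faces do.

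I would then check $ VJ = 1 $ on the nose. Indeed $ V $ reads only the binary (length-two) composite of an algebra, and by construction the binary composite of $ J\D $ is the original vertical composition of $ \D$; the same holds for units, associators, for morphisms in all three directions, and for cells. Hence $ VJ\D = \D $ exactly. For the counit I would define $ \ep\c JV \to 1 $ at $ (A, a, \ka) $ to be the canonical comparison from the left-bracketed composite $ a' $ produced by $ JV $ to the given $ a$, built by iterating $ \ka$; it is invertible because $ \ka $ is, and coherent by the pseudo-algebra axioms \eqref{5.1.2}. Since $ a' $ agrees with $ a $ on paths of length $ \le 2 $ (nullary, and by normality unary, being strict, and binary by definition), the component $ \ep $ is the identity there.

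The triangle identities then fall out of this one-sidedness: $ V\ep = 1 $ because $ V $ sees only lengths $ \le 2$, where $ \ep $ is the identity, and $ \ep J = 1 $ because $ J\D $ is already left-bracketed, so $ JVJ\D = J\D $ with identity comparison. The main obstacle I anticipate is the coherence bookkeeping for $ \ep$: one must verify that the iterated-$ \ka $ comparisons assemble into a \emph{transversal} natural isomorphism compatible with the lax, colax and pseudo comparison cells and with the quintet cells $ \bpi$, using the pentagon and unit conditions \eqref{5.1.2}. This is exactly the computation already carried out in two dimensions in \cite{GP10}, Section 5.4, and, thanks to the coskeletal dimension $ \le 3$, the passage to $012$-cubes introduces no conditions beyond those on their faces. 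The argument is therefore structurally parallel to the proof of Theorem \ref{2.8}, with the iterated associators playing the role there played by the comparison maps of the mixed-laxity functors.
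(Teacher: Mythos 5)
Your proposal follows essentially the same route as the paper: $V$ extracts the binary and nullary vertical operations (using that normality forces the unary composite and the degree-0 data to be trivial), $J$ left-brackets the $n$-ary composites so that $VJ=1$ on the nose, the three kinds of morphisms and the quintet cells correspond directly, the $012$-cubes are determined by their faces, and $\ep$ is the invertible pseudo morphism carried by the identity of the underlying graph and built from iterated associators, whence the triangle identities. The only cosmetic difference is your attribution of the two-dimensional computation to \cite{GP10}, Section 5.4 (which the paper cites only for the notion of adjoint equivalence of multiple categories); the paper instead leans on its own Sections \ref{5.2} and \ref{5.4}.
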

\begin{proof}
\ndt  (a) A normal pseudo algebra $ \A = (A, c, \ka) $ for $ T $ is a graph of categories 
$ A = (A_i, \dd^\al) $ with an assigned vertical composition of finite paths of vertical arrows 
and cells
    \begin{equation} \begin{array}{ccc}
u_1 \te ... \te u_n  =  c(u_1,..., u_n),  &\q&  a_1 \te ... \te a_n  =  c(a_1,..., a_n),
\\[5pt]
e_x  =  c(e(x)),   &&   e_f  =  c(e(f)),
    \label{5.8.2} \end{array} \end{equation}
and an (invertible) extended associator $ \ka\c c.Dc \to c.mA$.

	As a first consequence of normality, the unary vertical composition is trivial: $ c(u) = u $ 
and $ c(a) = a$, for all items of the category $ A_1$. Second, $ \A $ is trivial in degree 0, in the 
sense that $ (TA)_0 = \Hor_0(DA) = A_0$, while the functor $ c_0 $ and the natural 
transformation $ \ka_0$
    \begin{equation}
c_0\c (TA)_0 \to A_0,   \qq   \ka_0\c c_0.Dc_0 \to c_0.(mA)_0\c (T^2A)_0 \to A_0,
    \label{5.8.3} \end{equation}
are identities: this follows easily from the coherence conditions \eqref{5.1.2}, where 
$ \om A$, $(hA)_0 $ and $ (mA)_0 $ are identities.

\Ndt  (b) $ \A $ can be viewed as an `unbiased' weak double category, where all finite 
vertical compositions are assigned. The fact that $ \ka_0 $ is trivial says that the comparison 
cells of the unbiased associator $ \ka_1 $ are special, i.e.\ their horizontal arrows are identities 
- as required in the axioms of weak double categories, for the binary associator and the unitors.

	The normal pseudo algebra $ \A $ has un underlying weak double category $ V(\A)$, 
obtained by extracting the binary and zeroary vertical operations and their comparisons. 
We get a canonical triple functor $ V$, that sends:

\Ndt  - a pseudo, lax or colax morphism of normal pseudo algebras to the corresponding pseudo, 
lax or colax double functor of weak double categories, reducing the unbiased comparisons of 
finite vertical composition to the `biased ones', of binary and zeroary composition,

\Ndt  - a 2-dimensional cell of type 01 (resp. 02, 12)
%
    \begin{equation} \begin{array}{c} 
    \xymatrix  @C=10pt @R=6pt
{
~\A~   \ar[rr]   \ar[dd]   &   \ar@/^/[ld]^\ph   &   ~\A'~   \ar[dd]
\\ 
\\ 
~\B~   \ar[rr]    &&   ~\B'~
}
    \label{5.8.4} \end{array} \end{equation}
to the corresponding quintet (resp. quintet, generalised quintet) of `functors': the latter are 
reduced to their biased comparisons, but the components of $ \ph $ (on the objects and 
vertical arrows of $ \A$) stay the same,

\Ndt  - a `commutative cube' of 2-dimensional cells to the `commutative cube' of the modified 
cells.

\Ndt  (c) The other way round, we construct a triple functor $ J $ such that $ VJ = 1$, by 
choosing {\em a} `bracketing' of $n$-ary compositions. Namely, a weak double category 
$ \D $ can be extended to a normal pseudo algebra $ J(\D) $ by defining the $n$-ary vertical 
composition (of vertical arrows and cells) as
    \begin{equation}
x_1 \te ... \te x_n  =  (...((x_1 \te x_2) \te x_3)... \, \te x_n),
    \label{5.8.5} \end{equation}
and extending the comparisons. A strict, or lax, or colax double functor becomes a strict, or lax, 
or colax morphism, by extending the comparisons (in the last two cases). For a 2-dimensional 
cell we just note that its coherence with the unbiased comparisons implies coherence with 
the biased ones.

\Ndt  (d) Finally, a normal pseudo algebra $ \A = (A, c, \ka) $ produces an object 
$ JV(\A) = (A, c', \ka') $ with modified unbiased vertical operations and modified unbiased 
comparisons. The identity $ \id(A)$ of the underlying graph extends to an invertible 
{\em pseudo morphism} 
$$ \ep\A\c JV(\A) \to_0 \A $$
satisfying the triangular conditions, as in \eqref{5.8.1}. For a lax morphism $F\c \A \to \B $ 
we get an obvious 01-cell $ \ep F \c JV(F) \to_0 F$, inhabited by an identity; similarly for 
colax morphisms and generalised quintets.

	This point fails if we restrict to the triple categories $ \iS(\rN\Psa_2(T))$ and $\iS\Dbl$ 
(with strict items in the transversal direction), because the component $ \ep \A $ is not a 
strict morphism.
\end{proof}	
		

\vspace{5mm}

\end{document}